\definecolor{lightgrey}{rgb}{.804,.804,.756}
\definecolor{myred}{rgb}{.545,0,0}
\definecolor{myblue}{rgb}{.024,.15,.645}    
\definecolor{mygreen}{rgb}{0,.455,0}
\newcommand{\ZZ}{\mathbb{Z}}
\renewcommand{\O}{\mathcal{O}}
\newcommand{\MM}{\mathcal{M}}
\newcommand{\Orb}{\operatorname{Orb}}
\newcommand{\Id}{\operatorname{Id}}
\newcommand{\rk}{\operatorname{rk}}
\newcommand{\Quandle}{\mathbf{Quandle}}
\newcommand{\Grp}{\mathbf{Grp}}
\newcommand\op{\mathrel{\triangleleft}}
\newcommand\wop{\mathrel{\widetilde{\triangleleft}}}
\newcommand{\quotient}[2]{{\left.\raisebox{.2em}{$#1$}\middle/\raisebox{-.2em}{$#2$}\right.}}
\numberwithin{equation}{section}
\numberwithin{figure}{section}
	\theoremstyle{plain}
\newtheorem{thm}{Theorem}[section]
\newtheorem{cor}[thm]{Corollary}
\newtheorem{pro}[thm]{Proposition}
\newtheorem*{conjecture*}{Conjecture}
\newtheorem{question}[thm]{Question}
	\theoremstyle{definition}
\newtheorem{defn}[thm]{Definition}
	\theoremstyle{remark}
\newtheorem{rem}[thm]{Remark}
\setlist{nolistsep}
\begin{document}

\title[]{Abelian quandles and \\ quandles with abelian structure group}

\begin{abstract}
Sets with a self-distributive operation (in the sense of $(a \triangleleft b) \triangleleft c = (a \triangleleft c) \triangleleft (b \triangleleft c)$), in particular quandles, appear in knot and braid theories, Hopf algebra classification, the study of the Yang--Baxter equation, and other areas. An important invariant of quandles is their structure group. The structure group of a finite quandle is known to be either ``boring'' (free abelian), or ``interesting'' (non-abelian with torsion). In this paper we explicitly describe all finite quandles with abelian structure group. To achieve this, we show that such quandles are abelian (i.e., satisfy $(a \triangleleft b) \triangleleft c =(a \triangleleft c) \triangleleft b$); present the structure group of any abelian quandle as a central extension of a free abelian group by an explicit finite abelian group; and determine when the latter is trivial. In the second part of the paper, we relate the structure group of any quandle to its 2nd homology group $H_2$. We use this to prove that the $H_2$ of a finite quandle with abelian structure group is torsion-free, but general abelian quandles may exhibit torsion. Torsion in $H_2$ is important for constructing knot invariants and pointed Hopf algebras.
\end{abstract}

\keywords{Quandle, structure group, Yang--Baxter equation, rack homology}

\subjclass[2010]{
 20N02, 
 20F05  
 20E22, 
 20K01, 
 55N35, 
 16T25. 
 }

\author{Victoria Lebed}
\address{LMNO, Universit\'e de Caen--Normandie, BP 5186, 14032 Caen Cedex, France}
\email{lebed@unicaen.fr}
  
\author{Arnaud Mortier}
\address{LMNO, Universit\'e de Caen--Normandie, BP 5186, 14032 Caen Cedex, France}
\email{arnaud.mortier@unicaen.fr}

\maketitle

\section{Introduction}

A \emph{quandle} is a set $X$ with an idempotent binary operation~$\op$ such that the right translation by any element is a quandle automorphism. In other words, it should satisfy the following axioms for all $a,b,c \in X$:
\begin{enumerate}
\item self-distributivity: $(a \op b) \op c = (a \op c) \op (b \op c)$;
\item the right translation $- \op b$ is a bijection $X \to X$;
\item idempotence: $a \op a = a$.
\end{enumerate}
Removing the last axiom, one gets the notion of \emph{rack}. Groups with the conjugation operation $a \op b = b^{-1}ab$ are fundamental examples of quandles. This yields a functor $\operatorname{Conj} \colon \Grp \to \Quandle$. Numerous other quandle families of various nature are known. The systematic study of self-distributivity was motivated by applications to low-dimentional topology, and goes back to \cite{MR638121, MR672410}.

The \emph{structure group} (also called the \emph{enveloping group}) of a quandle $(X,\op)$ is defined by the following presentation:
\[G{(X,\op)} = \langle g_a, \, a \in X \,|\, g_a g_b = g_b g_{a\op b}, \, a,b \in X \rangle.\]
It brings group-theoretic tools into the study of quandles. More conceptually, it yields a functor $\operatorname{SGr} \colon \Quandle \to \Grp$ which is left adjoint to $\operatorname{Conj}$. The structure group of a rack can be defined along the same lines; however, since the structure groups of a rack and of its associated quandle are isomorphic (see for instance~\cite{LV3}), we treat only the quandle case here.

Structure groups of finite quandles exhibit the following dichotomy:
\begin{enumerate}
\item either they are free abelian of rank $r = \# \Orb (X,\op)$ (the number of orbits of~$X$ with respect to all right translations $- \op b$),
\item or they are non-abelian and have torsion.
\end{enumerate}
In the second case, $G{(X,\op)}$ has a finite index free abelian subgroup of rank~$r$; see \cite{LV3} for more details.

It is natural to ask which quandles fall into the first, ``boring'', category above. The condition $G{(X,\op)} \cong \ZZ$ is easily seen to be equivalent to $X$ being one-element. Quandles with $G{(X,\op)} \cong \ZZ^2$ were completely characterised in \cite{BaNa}. They are parametrised by coprime couples $(m,n)$, with $m\leq n$, and are presented as 
\begin{align}
&U_{m,n} = \{x_0,x_1, \ldots, x_{m-1}, y_0,y_1,\ldots, y_{n-1}\},\label{E:FP2}\\
&x_i \op x_j = x_i, \qquad y_k \op y_l = y_k,\qquad 
x_i \op y_k = x_{i+1}, \qquad  y_k \op x_i = y_{k+1},\notag
\end{align} 
where $0 \leq i,j \leq m-1$, $0 \leq k,l \leq n-1$, and we identify $x_{m}=x_0$ and $y_{n}=y_0$. These quandles were also considered, for different reasons, in~\cite{HomGraphic}.

In this paper we describe all finite quandles with $G{(X,\op)} \cong \ZZ^r$ for arbitrary~$r$ (Theorem~\ref{T:AbIFF}). Up to an action of the symmetric group $S_r$, they are parametrised by $\frac{r^2(r-1)}{2}$ natural numbers subject to some inequalities and a coprimality condition. We simplify this condition in the case $r=3$ (Theorem~\ref{T:Z3IFF}).

To achieve our characterisation, we first show that quandles with abelian structure group are necessarily \emph{abelian}\footnote{Terminology varies a lot in the area: some authors assign the term \emph{abelian} to the property $a \op b = b \op a$, others to $(a \op b) \op (c \op d)=(a \op c) \op (b \op d)$.}, i.e., satisfy the condition
\begin{equation}\label{E:AbelianQ}
(a \op b) \op c = (a \op c) \op b.
\end{equation}
This class of quandles is of independent interest---cf. \cite{2Red2,2Red,Medial,Medial2,HomQ2}. We parametrise $r$-orbit abelian quandles by $\frac{r^2(r-1)}{2}$ natural numbers subject to some inequalities (Theorem~\ref{T:AbQuChar}). This classification is implicit in~\cite{Medial}, where it is derived from structural results on more general {medial} quandles\footnote{I.e., satisfying the condition $(a \op b) \op (c \op d)=(a \op c) \op (b \op d)$. They are also known as \emph{entropic}, and sometimes called~\emph{abelian}.}. Our parametrisation is explicit, which is essential for further results in this paper, and constructive, hence easily programmable. Further, we present the structure group of an abelian $r$-orbit quandle $(X,\op)$ as a central extension of $\ZZ^r$ by an explicit finite abelian group $G'{(X,\op)}$ (Theorem~\ref{T:StrGrpOfAbQu}). Finally, we show that $G'{(X,\op)}$ is trivial (equivalently, $G{(X,\op)}$ is free abelian) if and only if certain greatest common divisor constructed out of the parameters of $(X,\op)$ is trivial.
 
Our result has the following application. The structure group construction extends to set-theoretic solutions $\sigma \colon X \times X \to X \times X$ to the Yang--Baxter equation; the quandle case corresponds to the solutions $(a,b) \mapsto (b, a\op b)$. Structure groups of involutive solutions ($\sigma^2=\Id$) are particularly well understood. One of the tools making involutive solutions accessible is the bijective group $1$-cocycle $G{(X,\sigma)} \to \ZZ^{\# X}$. For a general invertible non-degenerate solution, one has a bijective group $1$-cocycle $G{(X,\sigma)} \to G{(X,\op_\sigma)}$, where $(X,\op_\sigma)$ is the structure rack of $(X,\sigma)$. Thus some results for involutive solutions extend to solutions with $G{(X,\op_\sigma)}$ free abelian. See \cite{GIVdB,ESS,Sol,LYZ,LV,LV3} for more detail.

This discussion raises the following questions:
\begin{question} 
What structural property of a YBE solution corresponds to its structure rack $(X,\op_\sigma)$ being abelian? having abelian structure group $G{(X,\op_\sigma)}$?
\end{question}

The \emph{(rack) homology}\footnote{One could also discuss the \emph{quandle homology} of $(X,\op)$, or consider more complicated coefficients than $\ZZ$. Classical results \cite{LiNe} allow one to reduce these broader contexts to our case.} $H_{\bullet}(X,\op)$ of a quandle $(X,\op)$ is the homology of the following chain complex:
\begin{align}
&C_k (X,\op) = \ZZ X^{k},\notag\\
&d_k(a_1, \ldots, a_{k}) = \sum_{i=2}^{k} (-1)^{i-1} [(a_1,\ldots,\widehat{a_{i}}, \ldots,a_{k})\label{E:d}\\
&\hspace*{3cm} - (a_1 {\op a_i},\ldots,a_{i-1} {\op a_i}, a_{i+1}, \ldots, a_{k})].\notag\
\end{align}
Here $\widehat{a_{i}}$ means that the entry $a_{i}$ is omitted, and the formula for $d_k$ is extended to the whole $\ZZ X^{k}$ by linearity. The rank of $H_{k}(X,\op)$ is known to be $r^k$ (as before, $r = \# \Orb (X,\op)$) \cite{EtGr}. The torsion part of $H_{\bullet}(X,\op)$, which is the part needed for powerful knot invariants and Hopf algebra classification
\cite{RackHom,QuandleHom,AndrGr}, is much less uniform. Even the case of $H_{2}(X,\op)$, the most useful in practice, is understood only for particular families of quandles: Alexander, quasigroup, one-orbit etc. \cite{RackSpace,HomDihedral,Clauwens2,HomTakasaki,HomQuasigroup,GarIglVen,HomAlex}.

In this paper we show that $H_{2}(X,\op)$ is torsion-free for a finite quandle with abelian structure group (Corollary~\ref{C:HomAbStrGrp}). For a general finite abelian quandle, the torsion part of $H_2 (X,\op)$ is a  sum of $r$ (possibly different) quotients of $G'{(X,\op)}$ (Theorem~\ref{T:HomAb}). These quotients can be anything between trivial, like in Corollary~\ref{C:Hom1EltOrbits}, and the whole $G'{(X,\op)}$, like in 
\[H_2(U_{m,n}) \cong \ZZ^4 \oplus G'(U_{m,n})^2 \cong \ZZ^4 \oplus \ZZ^2_{\gcd(m,n)}\]
(Proposition~\ref{P:Hom2orbits}\footnote{This computation appeared before in~\cite{HomGraphic}. Here we recover it using a different method, which we then adapt to several generalisations of $U_{m,n}$. In particular we correct a homology computation from~\cite{HomGraphic}.}). The situation here resembles what happens for one-orbit quandles: there $H_2(X,\op)$ is also controlled by a finite group \cite{GarIglVen}. Our main tool is an explicit group morphism (working for any rack)
\[\prod_{i=1}^r \operatorname{Stab}(a_i,G(X,\op)) \twoheadrightarrow H_2(X,\op),\]
where the $a_i$ are representatives of the orbits of $(X,\op)$, and  the stabiliser subgroups refer to the classical $G(X,\op)$-action on~$X$ (Proposition~\ref{P:GvsH2}).

We finish with an open question:
\begin{question} 
How does the (general degree) homology of a finite abelian quandle depend on its parameters?
\end{question}
In this paper we give examples suggesting that the answer might be rather subtle. In particular we show that the group $G'{(X,\op)}$ does not determine the torsion of $H_{2}(X,\op)$ completely. For instance, the torsion can be trivial without $G'$ being so.


\section{A parametrisation of abelian quandles}\label{S:AbQu}


In this section we classify finite abelian quandles with $r$ orbits. Our description generalises the presentation~\eqref{E:FP2} of the quandles $U_{m,n}$.

Fix a positive integer $r \geq 2$. Take a collection of $\frac{r(r-1)}{2}$ integers
\begin{equation}\label{E:ParMatrix}
M=(m_{i,j})_{1 \leq j \leq i <r}, \qquad \text{ with } 1 \leq m_{i,i}, \text{ and } 0 \leq m_{j,i} < m_{i,i} \text{ for } i < j.
\end{equation}
It can be considered as a lower triangular matrix of size $r-1$. To these parameters we associate an abelian group
\[G(M) = \langle x_1,x_2, \ldots, x_{r-1} \,|\, x_ix_j=x_jx_i, \, x_1^{m_{i,1}}x_2^{m_{i,2}}\cdots x_i^{m_{i,i}} =1 \rangle,\]
where $i$ and $j$ vary between $1$ and $r-1$. In what follows, it will be convenient to use the notations $x_0:=1$ and $m_i:=m_{i,i}$. The group $G(M)$ is finite abelian, of order $m_{1}m_{2}\cdots m_{r-1}$.

For example, for $r=2$ we get a cyclic group of order $m_{1}$, and for $r=4$ and $M=\begin{psmallmatrix}
m_1&&\\ m_{2,1}&m_{2}&&\\m_{3,1}&m_{3,2}&m_{3}
\end{psmallmatrix}$ we get $3$ commuting generators subject to $3$ relations
\begin{align*}
&x_1^{m_{1}}=1,\\
&x_1^{m_{2,1}}x_2^{m_{2}}=1,\\
&x_1^{m_{3,1}}x_2^{m_{3,2}}x_3^{m_{3}}=1.
\end{align*}

Now, take $r$ collections $M^{(1)}$, \ldots, $M^{(r)}$ as above, and consider the disjoint union
\[Q(M^{(1)}, \ldots, M^{(r)})=G(M^{(1)})\sqcup \ldots \sqcup G(M^{(r)}).\]
The generator $x_i$ of $G(M^{(j)})$ will be denoted by $x_i^{(j)}$. We endow $Q(M^{(1)}, \ldots, M^{(r)})$ with a binary operation $\op$ as follows. For any $a^{(i)} \in G(M^{(i)})$ and $b^{(i+k)}\in G(M^{(i+k)})$ (here $0 \leq k < r$, and the sum $i+k$ is considered modulo $r$), put
\[a^{(i)} \op b^{(i+k)} = a^{(i)}x_k^{(i)} \in G(M^{(i)}).\]
In particular, $a^{(i)} \op b^{(i)} = a^{(i)}$. In the simplest case $r=2$, we recover the quandle $U_{m_1^{(1)},m_1^{(2)}}$ from \eqref{E:FP2}. For general $r$, we still get a quandle operation:

\begin{pro}\label{T:FPQu}
The data $(Q(M^{(1)}, \ldots, M^{(r)}),\op)$ above define an abelian quandle. The $r$ components $G(M^{(i)})$ are its orbits. 
\end{pro}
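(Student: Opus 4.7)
The plan is to check each required axiom by a direct but short computation, exploiting the fact that everything happens inside the abelian groups $G(M^{(i)})$.

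First I would verify the three quandle axioms. Idempotence is immediate: if $a \in G(M^{(i)})$, then $a \op a = a \cdot x_0^{(i)} = a$ since $x_0 = 1$. For the right translation by $b \in G(M^{(i+k)})$, its restriction to $G(M^{(i)})$ is multiplication by $x_k^{(i)}$ (and the restriction to each other component $G(M^{(j)})$ is multiplication by $x_{i+k-j \bmod r}^{(j)}$). Each such restriction is a bijection of the corresponding finite group onto itself, so the right translation is a bijection on $Q$. For self-distributivity, fix $a \in G(M^{(i)})$, $b \in G(M^{(i+k)})$, $c \in G(M^{(i+\ell)})$. Then on the left
\[
(a \op b) \op c = (a \, x_k^{(i)}) \op c = a\, x_k^{(i)}\, x_\ell^{(i)},
\]
which lies in $G(M^{(i)})$. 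For the right-hand side, note that $a \op c \in G(M^{(i)})$ equals $a\, x_\ell^{(i)}$ while $b \op c \in G(M^{(i+k)})$ (since whatever its value is, it lives in the same orbit as $b$). Hence
\[
(a \op c) \op (b \op c) = a\, x_\ell^{(i)}\, x_k^{(i)},
\]
and the two expressions agree because $G(M^{(i)})$ is abelian.

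Next, the abelian identity \eqref{E:AbelianQ} follows from exactly the same computation: $(a \op c) \op b = a\, x_\ell^{(i)}\, x_k^{(i)} = a\, x_k^{(i)}\, x_\ell^{(i)} = (a \op b) \op c$.

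Finally I would show the orbit decomposition. Each right translation preserves every $G(M^{(i)})$, so orbits are contained in the $G(M^{(i)})$. Conversely, the right translations of $G(M^{(i)})$ by elements of the $r$ different components act as multiplication by $x_0^{(i)} = 1, x_1^{(i)}, \ldots, x_{r-1}^{(i)}$ respectively, and these include all the generators of $G(M^{(i)})$; hence starting from any $a \in G(M^{(i)})$, successive right translations reach every element of $G(M^{(i)})$. So the orbits are exactly the components $G(M^{(i)})$.

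No step is really a serious obstacle here; the only mild subtlety is bookkeeping the indices mod $r$ and remembering that the operation is the multiplication in each $G(M^{(i)})$, so commutativity of the $G(M^{(i)})$ makes both self-distributivity and \eqref{E:AbelianQ} collapse to the same two-line computation.
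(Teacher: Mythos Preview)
Your proof is correct and follows essentially the same approach as the paper: both rely on the commutativity of the groups $G(M^{(i)})$ and on the fact that $a\op b$ depends only on the orbit of~$b$. The only cosmetic difference is that the paper first establishes abelianity and then deduces self-distributivity from it (since $b$ and $b\op c$ lie in the same orbit), whereas you verify both identities by the same direct computation; the content is identical.
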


\begin{defn}\label{D:FP}
The quandles above will be called \emph{filtered-permutation}, or \emph{FP}.
\end{defn}

\begin{proof}
Quandle axioms (3) and (2), and the assertion about the orbits, are clear from the construction. Moreover, the groups  $G(M^{(i)})$ are commutative, so all right $\op$-actions commute, hence the abelianity axiom~\eqref{E:AbelianQ}. Let us check the self-distributivity axiom (1). By construction, all elements from the same orbit $G(M^{(i)})$ of $Q:=Q(M^{(1)}, \ldots, M^{(r)})$ right $\op$-act in the same way. Hence for all $a,b,c \in Q$ one has
\[(a \op b) \op c = (a \op c) \op b = (a \op c) \op (b \op c),\]
as required.
\end{proof}

Recall that a quandle $(X,\op)$ is called \emph{$2$-reductive} if the relation
\begin{equation}\label{E:2Red}
a \op (b \op c)= a \op b
\end{equation}
holds for all $a,b,c \in X$.

\begin{thm}\label{T:AbQuChar}
For a finite quandle $(X,\op)$, the following conditions are equivalent:
\begin{enumerate}
\item $(X,\op)$ is abelian;
\item $(X,\op)$ is $2$-reductive;
\item $(X,\op)$ is (isomorphic to) a filtered-permutation quandle.
\end{enumerate}
Moreover, two FP quandles with $r$ \textbf{ordered orbits} are isomorphic if and only if they have the same parameters $M^{(1)}, \ldots, M^{(r)}$.
\end{thm}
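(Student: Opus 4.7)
The plan is to prove the equivalences $(3) \Rightarrow (1) \Rightarrow (2) \Rightarrow (3)$ and then the uniqueness of the parameters. The implication $(3) \Rightarrow (1)$ has already been established in Proposition~\ref{T:FPQu}. For the equivalence $(1) \Leftrightarrow (2)$, I would rewrite self-distributivity as the identity $R_c R_b = R_{b \op c} R_c$ in $\Sym(X)$, where $R_b := - \op b$; condition~(1) then reads $R_c R_b = R_b R_c$, while condition~(2) reads $R_{b \op c} = R_b$, and since $R_c$ is a bijection each of these is equivalent to the other modulo self-distributivity.

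The bulk of the work is in $(2) \Rightarrow (3)$. Given a finite 2-reductive quandle $(X, \op)$ with orbits $\Orb_1, \ldots, \Orb_r$, the first step is to show that $R_b$ depends only on the orbit of $b$: the equation $R_b = R_{b \op c}$ (2-reductivity) together with $R_b = R_{R_c^{-1}(b)}$ (apply 2-reductivity at $R_c^{-1}(b)$) propagates from $b$ to all of $\Orb_b$. Denote the common restriction of $R_b$ to $\Orb_i$, for $b \in \Orb_j$, by $\rho_{i,j}$. The identity $a \op (a \op b) = a \op a = a$ combined with the preceding step yields $\rho_{i,i} = \id$. The permutations $\rho_{i,j}$, $j \neq i$, pairwise commute by condition~(1), act transitively on $\Orb_i$ by the very definition of orbits, and therefore act freely, since any abelian transitive action is free. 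Pick orbit representatives $a_i \in \Orb_i$ and order cyclically $\rho_{i, i+1}, \rho_{i,i+2}, \ldots, \rho_{i,i+r-1}$ (indices mod $r$); this presents $\Orb_i$ as a free transitive $\ZZ^{r-1}$-set, whose kernel $K_i \subset \ZZ^{r-1}$ has full rank. The Hermite Normal Form of $K_i$ is a unique lower triangular matrix $M^{(i)}$ fitting the constraints of~\eqref{E:ParMatrix}; the equivariant bijection $g \mapsto g \cdot a_i$ identifies $\Orb_i$ with $G(M^{(i)})$, and assembling these bijections gives the desired isomorphism $X \cong Q(M^{(1)}, \ldots, M^{(r)})$.

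For the uniqueness statement, any isomorphism $Q(M) \to Q(N)$ respecting the orbit order intertwines the corresponding $\rho_{i,j}$-actions, hence induces an isomorphism of the $\ZZ^{r-1}$-sets $G(M^{(i)}) \cong G(N^{(i)})$; the stabiliser lattices $K_i^M$ and $K_i^N$ therefore coincide, and uniqueness of HNF forces $M^{(i)} = N^{(i)}$. The main subtlety will be verifying that the exact inequalities in~\eqref{E:ParMatrix}---lower triangular form, positive diagonal, column-wise reduction of off-diagonal entries---coincide with the standard Hermite Normal Form, together with the bookkeeping of the cyclic reindexing $j \mapsto j - i \bmod r$ that matches the abstract generators $\rho_{i, i+k}$ with the FP generators $x_k^{(i)}$.
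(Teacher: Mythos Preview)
Your proposal is correct and follows essentially the same route as the paper: the equivalence $(1)\Leftrightarrow(2)$ via right translations, the reduction of $R_b$ to an orbit-dependent commuting family $\rho_{i,j}$ acting regularly on each orbit, and the extraction of the parameters from the resulting $\ZZ^{r-1}$-action. The only difference is packaging---you invoke Hermite Normal Form directly for both the construction of $M^{(i)}$ and the uniqueness statement, whereas the paper builds the entries $m^{(i)}_{j,k}$ one row at a time by tracking when successive $f_{i,i+k}$ first return to the sub-orbit generated by the previous ones; these are the same computation, and your phrasing is arguably cleaner (though note your wording ``free transitive $\ZZ^{r-1}$-set with kernel $K_i$'' is slightly self-contradictory---you mean the induced $\ZZ^{r-1}/K_i$-action is free).
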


\begin{defn}\label{D:pars}
If $(X,\op) \cong Q(M^{(1)}, \ldots, M^{(r)})$, as in (3), we call $M^{(1)}, \ldots, M^{(r)}$ the \emph{parameters} of $(X,\op)$. To make this definition unambiguous, from now on we will work with finite quandles with \textbf{ordered orbits}.
\end{defn}

The equivalence (1) $\Leftrightarrow$ (2) is folklore; the equivalence (1) $\Leftrightarrow$ (3) and the uniqueness assertion are implicit in~\cite{Medial}.

\begin{proof}

(1) $\Rightarrow$ (2). If $(X,\op)$ is abelian, then
\[(a \op b) \op c = (a \op c) \op b = (a \op b) \op (c \op b)  \; \text{ for all } a,b,c \in X.\]
Since the right translation $- \op b$ is bijective, we deduce
\[a \op c = a \op (c \op b) \; \text{ for all } a,b,c \in X.\]

(2) $\Rightarrow$ (1). If $(X,\op)$ is $2$-reductive, then
\[(a \op b) \op c = (a \op c) \op (b \op c) = (a \op c) \op b  \; \text{ for all } a,b,c \in X.\]

(3) $\Rightarrow$ (1) was proved in Proposition~\ref{T:FPQu}.

The implication (1) $\Rightarrow$ (3) requires more work. Let $(X,\op)$ be a finite abelian, hence $2$-reductive, quandle. In particular, $a \op a' = a \op a = a$ for $a$ and $a'$ from the same orbit. Let $O_1,\ldots,O_r$ be the orbits of $(X,\op)$. The $2$-reductivity yields permutations $f_{i,j} \in \operatorname{Perm}(O_i)$, $1 \leq i, j \leq r$ such that
\begin{equation}\label{E:fij}
a \op b =f_{i,j}(a)  \text{ for all } a \in O_i, b \in O_j.
\end{equation}
These permutations satisfy the following conditions:
\begin{enumerate}[label=(\alph*)]
\item commutativity: $f_{i,j}f_{i,k}=f_{i,k}f_{i,j}$;
\item transitivity: the $f_{i,j}$, $1 \leq j \leq r$, generate a transitive subgroup $G_i$ of $\operatorname{Perm}(O_i)$;
\item freeness: $a \cdot g = a$ for some $a \in O_i$ and $g \in G_i$ implies $a' \cdot g = a'$ for all $a' \in O_i$.
\end{enumerate}
Indeed, (a) follows from abelianity, and (b) from the definition of orbits and the finiteness of~$X$. For (c), using transitivity, write $a'=a\cdot h$ for some $h \in G_i$ to get
\[a' \cdot g = (a\cdot h) \cdot g = a\cdot (hg)= a\cdot (gh) = (a\cdot g) \cdot h = a\cdot h = a'.\]

Now, fix an index $i$. All the indices below are considered modulo~$r$. By the freeness, the permutation $f_{i,i+1}$ consists of cycles of the same length; denote this length by $m^{(i)}_1$. Further, take an $a \in O_i$; the permutation $f_{i,i+2}$ will send $a$ to a possibly different $f_{i,i+1}$-cycle, but after $m^{(i)}_2$ iterations will bring it back to the original $f_{i,i+1}$-cycle for the first time. This yields a condition $f_{i,i+2}^{m^{(i)}_2}(a)=f_{i,i+1}^{-m^{(i)}_{2,1}}(a)$ for some $0 \leq m^{(i)}_{2,1} < m^{(i)}_1$. Once again, freeness yields the relation $f_{i,i+1}^{m^{(i)}_{2,1}} f_{i,i+2}^{m^{(i)}_2}=1$ in $\operatorname{Perm}(O_i)$. Similarly, by looking when  $f_{i,i+3}$ brings $a$ back to its original $\left\langle f_{i,i+1},f_{i,i+2}\right\rangle$-orbit (where we are considering the subgroup of $\operatorname{Perm}(O_i)$ generated by $f_{i,i+1}$ and $f_{i,i+2}$), one finds a relation $f_{i,i+1}^{m^{(i)}_{3,1}} f_{i,i+2}^{m^{(i)}_{3,2}} f_{i,i+3}^{m^{(i)}_3}=1$ in $\operatorname{Perm}(O_i)$, with $0 \leq m^{(i)}_{3,1} < m^{(i)}_1$ and $0 \leq m^{(i)}_{3,2} < m^{(i)}_2$. See Fig.~\ref{P:Orbits} for an example: here $r=4$, $i=1$, and $M^{(1)}=\begin{psmallmatrix}
3&&\\ 2&2&&\\0&0&2
\end{psmallmatrix}$.

\definecolor{ttqqqq}{rgb}{0.,0.,0.}
\definecolor{ccqqqq}{rgb}{0.8,0.,0.}
\definecolor{ududff}{rgb}{0.2,0.1,1.}
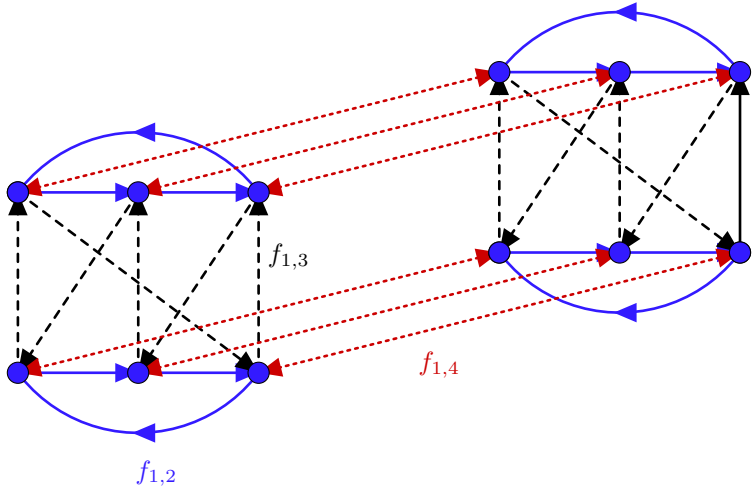
\begin{figure}[h]
\begin{tikzpicture}[line cap=round,line join=round,>=triangle 45,x=.8cm,y=.8cm]
\draw [->,line width=1.pt,color=ududff] (1.,1.) -- (3.,1.);
\draw [->,line width=1.pt,color=ududff] (3.,1.) -- (5.,1.);
\draw [shift={(3.,2.5)},line width=1.pt,color=ududff]  plot[domain=3.7850937623830774:5.639684198386302,variable=\t]({1.*2.5*cos(\t r)+0.*2.5*sin(\t r)},{0.*2.5*cos(\t r)+1.*2.5*sin(\t r)});
\draw [->,line width=1pt,dotted,color=ududff] (3.1,0.) -- (2.9,0.);
\draw [shift={(3.,2.5)},line width=1.pt,color=ududff]  plot[domain=0.6435011087932844:2.498091544796509,variable=\t]({1.*2.5*cos(\t r)+0.*2.5*sin(\t r)},{0.*2.5*cos(\t r)+1.*2.5*sin(\t r)});
\draw [->,line width=1.pt,color=ududff] (1.,4.) -- (3.,4.);
\draw [->,line width=1.pt,color=ududff] (3.,4.) -- (5.,4.);
\draw [->,line width=1.pt,color=ududff] (3.1,5.) -- (2.9,5.);
\draw [->,line width=1.pt,dash pattern=on 3pt off 3pt] (5.,1.) -- (5.,4.);
\draw [->,line width=1.pt,dash pattern=on 3pt off 3pt] (5.,4.) -- (3.,1.);
\draw [->,line width=1.pt,dash pattern=on 3pt off 3pt] (3.,1.) -- (3.,4.);
\draw [->,line width=1.pt,dash pattern=on 3pt off 3pt] (3.,4.) -- (1.,1.);
\draw [->,line width=1.pt,dash pattern=on 3pt off 3pt] (1.,1.) -- (1.,4.);
\draw [->,line width=1.pt,dash pattern=on 3pt off 3pt] (1.,4.) -- (5.,1.);
\draw [->,line width=1.pt,color=ududff] (9.,3.) -- (11.,3.);
\draw [->,line width=1.pt,color=ududff] (11.,3.) -- (13.,3.);
\draw [->,line width=1.pt,color=ududff] (9.,6.) -- (11.,6.);
\draw [->,line width=1.pt,color=ududff] (11.,6.) -- (13.,6.);
\draw [->,line width=1.pt,color=ududff] (11.1,2.) -- (10.9,2.);
\draw [->,line width=1.pt,color=ududff] (11.1,7.) -- (10.9,7.);
\draw [shift={(11.,4.5)},line width=1.pt,color=ududff]  plot[domain=3.7850937623830774:5.639684198386302,variable=\t]({1.*2.5*cos(\t r)+0.*2.5*sin(\t r)},{0.*2.5*cos(\t r)+1.*2.5*sin(\t r)});
\draw [shift={(11.,4.5)},line width=1.pt,color=ududff]  plot[domain=0.6435011087932844:2.498091544796509,variable=\t]({1.*2.5*cos(\t r)+0.*2.5*sin(\t r)},{0.*2.5*cos(\t r)+1.*2.5*sin(\t r)});
\draw [->,line width=1.pt] (13.,3.) -- (13.,6.);
\draw [->,line width=1.pt,dash pattern=on 3pt off 3pt] (13.,6.) -- (11.,3.);
\draw [->,line width=1.pt,dash pattern=on 3pt off 3pt] (11.,3.) -- (11.,6.);
\draw [->,line width=1.pt,dash pattern=on 3pt off 3pt] (11.,6.) -- (9.,3.);
\draw [->,line width=1.pt,dash pattern=on 3pt off 3pt] (9.,3.) -- (9.,6.);
\draw [->,line width=1.pt,dash pattern=on 3pt off 3pt] (9.,6.) -- (13.,3.);
\draw [<->,line width=1.pt,dotted,color=ccqqqq] (5.,1.) -- (13.,3.);
\draw [<->,line width=1.pt,dotted,color=ccqqqq] (3.,1.) -- (11.,3.);
\draw [<->,line width=1.pt,dotted,color=ccqqqq] (1.,1.) -- (9.,3.);
\draw [<->,line width=1.pt,dotted,color=ccqqqq] (5.,4.) -- (13.,6.);
\draw [<->,line width=1.pt,dotted,color=ccqqqq] (3.,4.) -- (11.,6.);
\draw [<->,line width=1.pt,dotted,color=ccqqqq] (1.,4.) -- (9.,6.);
\draw [color=ududff](2.8,-0.3) node[anchor=north west] {$f_{1,2}$};
\draw [color=ttqqqq](5,3.3) node[anchor=north west] {$f_{1,3}$};
\draw [color=ccqqqq](7.5,1.5) node[anchor=north west] {$f_{1,4}$};
\begin{scriptsize}
\draw [fill=ududff] (1.,1.) circle (4pt);
\draw [fill=ududff] (3.,1.) circle (4pt);
\draw [fill=ududff] (5.,1.) circle (4pt);
\draw [fill=ududff] (1.,4.) circle (4pt);
\draw [fill=ududff] (3.,4.) circle (4pt);
\draw [fill=ududff] (5.,4.) circle (4pt);
\draw [fill=ududff] (1.,4.) circle (4pt);
\draw [fill=ududff] (9.,3.) circle (4pt);
\draw [fill=ududff] (11.,3.) circle (4pt);
\draw [fill=ududff] (13.,3.) circle (4pt);
\draw [fill=ududff] (9.,6.) circle (4pt);
\draw [fill=ududff] (11.,6.) circle (4pt);
\draw [fill=ududff] (13.,6.) circle (4pt);
\draw [fill=ududff] (13.,3.) circle (4pt);
\draw [fill=ududff] (9.,3.) circle (4pt);
\end{scriptsize}
\end{tikzpicture}
   \caption{An orbit of a $4$-orbit abelian quandle.}\label{P:Orbits}
\end{figure}

Iterating this argument, one obtains a parameter collection $M^{(i)}$ of the form~\eqref{E:ParMatrix}, and a transitive action of the group $G(M^{(i)})$ on $O_i$: the generator $x^{(i)}_k$ of $G(M^{(i)})$ act by $f_{i,i+k}$. Let us prove that this action is free. If it were not, one would have a relation $f_{i,i+1}^{m'_{k,1}}f_{i,i+2}^{m'_{k,2}}\cdots  f_{i,i+k-1}^{m'_{k,k-1}}f_{i,i+k}^{m'_k}=1$ in $\operatorname{Perm}(O_i)$, with $1 \leq k \leq r-1$, and $0<m'_k<m^{(i)}_k$. But this contradicts the minimality in the choice of $m^{(i)}_k$.

Now, choosing an $a_i \in \operatorname{Perm}(O_i)$ for all $i$, one gets the following identifications:
\begin{align*}
G(M^{(i)}) &\leftrightarrow O_i,\\
(x_1^{(i)})^{n_1}(x_2^{(i)})^{n_2}\ldots (x^{(i)}_{r-1})^{n_{r-1}} &\leftrightarrow f_{i,i+1}^{n_1}f_{i,i+2}^{n_2}\ldots f_{i,i+r-1}^{n_{r-1}}(a_i).
\end{align*}
Moreover, the action of~$f_{i,i+k}$ on~$O_i$ corresponds to multiplying by $x^{(i)}_k$ in $G(M^{(i)})$. One obtains a quandle isomorphism $Q(M^{(1)}, \ldots, M^{(r)}) \cong (X,\op)$, thus (3).

Finally, by the freeness of the $G_i$-action on~$O_i$, the parameter collection $M^{(i)}$ is independent of the choice of the orbit representative~$a_i$, and is thus uniquely determined by the isomorphism class of~$(X,\op)$, where we require isomorphisms to preserve a chosen order of orbits.
\end{proof}

\begin{rem}
The parameters $M^{(1)}, \ldots, M^{(r)}$ describe abelian quandles uniquely up to component reordering, that is, up to the permutation action of the symmetric group~$S_r$. For $r=2$, one gets rid of this redundancy by imposing $m^{(1)}_1\leq m^{(2)}_1$.
\end{rem}

\section{Structure groups of abelian quandles}\label{S:SGofAbQu}

In this section we describe the structure group of a finite abelian quandle in terms of its parameters.

\begin{defn}\label{D:G'}
Let $(X,\op)$ be a finite abelian quandle with parameters $M^{(i)}$. Its \emph{parameter group} is the following quotient of the direct product of the groups $G(M^{(i)})$:
\begin{equation}\label{E:G'}
G'(X,\op):=\quotient{\prod_{i=1}^r G(M^{(i)})}{\left\langle x^{(i)}_{j-i}x^{(j)}_{i-j} \,,\, 1\leq i < j \leq r \right\rangle}.
\end{equation}
\end{defn}

In the simplest case $r=2$ we have
\begin{align*}
G' & \,=\,\quotient{G(M^{(1)})\times G(M^{(2)})}{\left\langle x^{(1)}_{1}x^{(2)}_{1} \right\rangle} \\
& \,\cong\, \left\langle x^{(1)}_{1},x^{(2)}_{1} \,|\, (x^{(1)}_{1})^{m^{(1)}_1}, \, (x^{(2)}_{1})^{m^{(2)}_1}, \, x^{(1)}_{1}x^{(2)}_{1} \right\rangle \,\cong\, \ZZ_{\gcd(m^{(1)}_1,m^{(2)}_1)}.
\end{align*}

\begin{thm}\label{T:StrGrpOfAbQu}
Let $(X,\op)$ be a finite $r$-orbit abelian quandle. Its structure group $G(X,\op)$ is a central extension of $\ZZ^r$ by its parameter group $G'(X,\op)$. Moreover, $G'(X,\op)$ is a finite abelian group, and is (isomorphic to) the commutator subgroup of $G(X,\op)$.
\end{thm}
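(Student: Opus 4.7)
The plan is to identify the commutator subgroup $[G,G]$ of $G := G(X,\op)$ with the parameter group $G'(X,\op)$, verify that $[G,G]$ is central in~$G$, and compute $G^{\mathrm{ab}} \cong \ZZ^r$ to extract the announced central extension. Throughout I use the presentation $X \cong Q(M^{(1)}, \ldots, M^{(r)})$ from Theorem~\ref{T:AbQuChar}, writing $a = (i,u)$ for $u \in G(M^{(i)})$, and abbreviating $h_j := g_{(j,1)}$.

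The first step exploits $2$-reductivity: for any $a,b,c \in X$,
\begin{equation*}
g_{b \op c}^{-1} g_a g_{b \op c} \,=\, g_{a \op (b \op c)} \,=\, g_{a \op b} \,=\, g_b^{-1} g_a g_b,
\end{equation*}
so $g_b g_{b \op c}^{-1}$ commutes with every generator and thus lies in $Z(G)$. Consequently every element $g_{a \op b} g_a^{-1}$ is central, and from $g_a g_b = g_b g_{a \op b}$ one then extracts $[g_a, g_b] = g_{a \op b} g_a^{-1}$. So every commutator of generators is central, whence $[G,G] \subseteq Z(G)$. For the abelianization, $g_a g_b = g_b g_{a \op b}$ forces $g_a = g_{a \op b}$, identifying all generators within a common $\op$-orbit; nothing else remains, and $G^{\mathrm{ab}} \cong \ZZ^r$.

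I then define $\Phi \colon G'(X,\op) \to [G,G]$ by $x_k^{(j)} \mapsto [h_j, h_{j+k}]$ (indices mod~$r$). Bilinearity of the commutator pairing $\ZZ^r \times \ZZ^r \to [G,G]$---valid because $[G,G]$ is central---makes $\Phi$ surjective, and the antisymmetry $[h_i,h_j][h_j,h_i]=1$ matches the relations $x_{j-i}^{(i)} x_{i-j}^{(j)} = 1$ of~$G'$. For the defining relations of each $G(M^{(j)})$, one notes that conjugation by~$h_{j+s}$ sends $h_j$ to~$g_{(j,x_s^{(j)})}$, which in the $2$-nilpotent group~$G$ equals $h_j \cdot [h_j, h_{j+s}]$; iterating gives
\begin{equation*}
g_{(j,\, \prod_s (x_s^{(j)})^{n_s})} \,=\, h_j \cdot \prod_s [h_j, h_{j+s}]^{n_s}.
\end{equation*}
Whenever $\prod_s (x_s^{(j)})^{n_s} = 1$ in $G(M^{(j)})$, the left-hand side equals~$h_j$, giving $\prod_s [h_j, h_{j+s}]^{n_s} = 1$ in $[G,G]$---exactly the image under $\Phi$ of the corresponding defining relation of~$G'$.

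The main obstacle is injectivity of~$\Phi$. For this I would construct an external central extension $E := G'(X,\op) \times_{\omega} \ZZ^r$ with bilinear $2$-cocycle
\begin{equation*}
\omega(v, v') \,=\, \prod_{1 \leq i < j \leq r} (x_{j-i}^{(i)})^{v_i v'_j} \;\in\; G'(X,\op),
\end{equation*}
and the set map $\psi \colon X \to E$, $(j,u) \mapsto (u, e_j)$. Bilinearity of~$\omega$ forces the $2$-cocycle condition; the defining relations of~$G'$ yield $\omega(e_i, e_j) \omega(e_j, e_i)^{-1} = x_{j-i}^{(i)}$ for all $i \neq j$, whence $\psi(a) \psi(b) = \psi(b) \psi(a \op b)$ holds in~$E$. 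By the universal property of~$G(X,\op)$, $\psi$ extends to a homomorphism $\Psi \colon G \to E$ with $\Psi(h_j) = (1, e_j)$ and therefore $\Psi([h_i, h_j]) = (x_{j-i}^{(i)}, 0)$ for $i < j$. Hence $\Psi \circ \Phi$ is the canonical inclusion $G'(X,\op) \hookrightarrow E$, forcing $\Phi$ to be injective. Since $G'(X,\op)$ is manifestly a finite abelian group from its presentation, the theorem follows.
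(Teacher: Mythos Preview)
Your proof is correct and complete. The route differs from the paper's in the injectivity step, and the difference is worth noting.

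Both arguments establish centrality of $[G,G]$ and compute $G^{\mathrm{ab}}\cong\ZZ^r$, then build a surjection from $G'(X,\op)$ onto $[G,G]$. For injectivity, the paper extracts an explicit presentation of~$G$ on generators $h_i$ and $g_{i,j}:=[h_i,h_j]$ (its display~\eqref{E:PresentationG}), and then constructs by hand a set-theoretic map $\pi\colon G\to G'$ via a normal-form procedure (push all $h_1^{\pm1}$ to the left, then all $h_2^{\pm1}$, etc., and read off the residual word in the $g_{i,j}$). Checking that this normal form is well defined amounts to verifying the defining relations one by one. Your approach replaces this with the standard central-extension machinery: you write down a bilinear $2$-cocycle $\omega$ on~$\ZZ^r$ with values in $G'(X,\op)$, form the extension $E=G'\times_\omega\ZZ^r$, and verify that the set map $\psi\colon X\to E$, $(j,u)\mapsto(\bar u,e_j)$, satisfies the structure-group relations, so that the universal property hands you $\Psi\colon G\to E$ with $\Psi\circ\Phi$ the canonical inclusion. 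This is more conceptual and avoids the normal-form bookkeeping; in effect you are \emph{constructing} the group with presentation~\eqref{E:PresentationG} as~$E$ rather than deriving that presentation from inside~$G$. The paper's approach, in return, makes the presentation of~$G$ itself explicit, which it reuses later (e.g.\ the $r=2$ display following the theorem statement). One small point: when you write $(j,u)\mapsto(u,e_j)$, the first coordinate is implicitly the image of $u\in G(M^{(j)})$ under the natural map to $G'(X,\op)$; it would be worth saying so.
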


In the proof we describe this extension explicitly. For $r=2$ it looks as follows:
\[G \cong \left\langle h_1,h_2,q \,|\, h_2h_1=qh_1h_2,\, h_1q=qh_1,\, h_2q=qh_2,\,  q^d=1\right\rangle,\] 
where $d=\gcd(m^{(1)}_1,m^{(2)}_1)$.

In what follows we will often identify $G'$ with the commutator subgroup of $G$.

\begin{proof}
By Theorem~\ref{T:AbQuChar}, it suffices to work with the filtered-permutation quandle $Q:=Q(M^{(1)}, \ldots, M^{(r)})$. The defining relations of its structure group $G$ are
\begin{equation}\label{E:GroupOfFPQuandle}
g_a g_b = g_b g_{ax_{j-i}} \; \text{ for } a \in G(M^{(i)}),\, b \in G(M^{(j)}).
\end{equation} 
As usual, the index $j-i$ is taken modulo $r$. The decorations ${}^{(i)}$ are omitted when clear from the context.

Denote by $G_i$ the subgroup of $G$ generated by the $g_a$ with $a \in G(M^{(i)})$. Since $x^{(i)}_0=1$, it is commutative. Further, one can rewrite \eqref{E:GroupOfFPQuandle} as
\begin{equation}\label{E:GroupOfFPQuandle3}
g_a^{-1} g_b^{-1} g_a g_b = g_a^{-1} g_{ax_{j-i}} \in G_i.
\end{equation}
In particular, this expression is independent of~$b$. Exchanging the roles of $a$ and $b$, one gets
\[g_b^{-1} g_a^{-1} g_b g_a = g_b^{-1} g_{bx_{i-j}} \in G_j,\]
which is independent of~$a$, and is the inverse of the preceding expression. Denoting both sides of~\eqref{E:GroupOfFPQuandle3} by $g_{i,j}$, one thus obtains elements $g_{i,j} \in G_i \cap G_j$ (and hence commuting with $G_i$ and $G_j$), which allow one to break~\eqref{E:GroupOfFPQuandle3} into two parts:
\begin{align}
&g_a g_b = g_{i,j} g_b g_a \; \text{ for } a \in G(M^{(i)}), b\in G(M^{(j)}),\label{E:GroupOfFPQuandle2}\\
&g_{ax_{j-i}} = g_{i,j} g_a \; \text{ for } a \in G(M^{(i)}).\label{E:GroupOfFPQuandle4}
\end{align} 
Moreover, the $g_{i,j}$ satisfy 
\begin{align}
&g_{i,j}g_{j,i}=1 \; \text{ for } 1 \leq i < j \leq r,\label{E:Gij}\\
&g_{i,i}=1 \; \text{ for } 1 \leq i \leq r.\label{E:Gij2}
\end{align}

We will now prove that
\begin{align}
&g_{i,j} \text{ is central in } G \; \text{ for } 1 \leq i, j \leq r. \label{E:Gij3}
\end{align}
Indeed, it can be written as $g_a^{-1} g_{a'}$, with $a$ and $a'$ from the same orbit $G(M^{(i)})$. Taking $b \in G(M^{(j)})$, one computes
\begin{equation}\label{E:Compute}
g_a^{-1} g_{a'}g_b=g_a^{-1} g_{i,j} g_b g_{a'}= g_{i,j} g_a^{-1}  g_b g_{a'}=g_{i,j} g_{j,i} g_b g_a^{-1} g_{a'}=g_b g_a^{-1} g_{a'},
\end{equation}
where we used that $g_{i,j}$ commutes with $G_i$. 

Further, from~\eqref{E:GroupOfFPQuandle4} one sees that the $g_{i,j}$ together with the elements
\[h_i:=g_{1^{(i)}} \in G_i\]
generate the whole group~$G$. Indeed, one can put
\[g_{(x_1^{(i)})^{n_1}(x_2^{(i)})^{n_2}\ldots (x^{(i)}_{r-1})^{n_{r-1}}} = g_{i,i+1}^{n_1}g_{i,i+2}^{n_2}\ldots g_{i,i+r-1}^{n_{r-1}}h_i.\]
This is well defined if and only if one has
\begin{align}
& g_{i,i+1}^{m^{(i)}_{j,1}}g_{i,i+2}^{m^{(i)}_{j,2}}\ldots g_{i,i+j}^{m^{(i)}_{j,j}}=1 \; \text{ for } 1 \leq i \leq r, 1 \leq  j < r. \label{E:Gij4}
\end{align}
If one assumes these conditions, relations~\eqref{E:GroupOfFPQuandle4} become redundant. Finally, since the $g_{i,j}$ are central, it is  sufficient to check relations~\eqref{E:GroupOfFPQuandle2} for the generators $h_i$ only:
\begin{align}
&h_i h_j = g_{i,j} h_j h_i. \label{E:Gij5}
\end{align}

This yields a new presentation for the group $G$:
\begin{equation}\label{E:PresentationG}
G \cong \langle g_{i,j}, \, 1 \leq i,j \leq r \, ; \, h_i, \, 1 \leq i \leq r \;|\; \eqref{E:Gij}, \eqref{E:Gij2}, \eqref{E:Gij3}, \eqref{E:Gij4}, \eqref{E:Gij5} \rangle.
\end{equation}

Next, denote by~$G''$ the subgroup of~$G$ generated by the $g_{i,j}$. From the presentation~\eqref{E:PresentationG}, one sees that $G''$ is the commutator subgroup of~$G$. It is well known\footnote{and true for any rack} that the abelianisation of~$G$ is $\ZZ^r=\oplus_{i=1}^r \ZZ e_i$. Indeed from the relations $g_b g_a = g_a g_b = g_b g_{a \op b}$ in $G_{\mathrm{ab}}$ one deduces that $g_a = g_{a'}$ whenever $a$ and $a'$ lie in the same orbit, thus the map $g_a \mapsto e_i$ for $a$ from the orbit $O_i=G(M^{(i)})$ yields a group isomorphism $G_{\mathrm{ab}} \cong \ZZ^r$. Hence the short exact sequence
\[0 \to G'' \to G \to \ZZ^r \to 0.\]
Since the $g_{i,j}$ are central in~$G$, this presents $G$ as a central extension of $\ZZ^r$ by~$G''$. 

It remains to prove that the groups $G':=G'(X,\op)$ and $G''$ are isomorphic. Relations \eqref{E:Gij}, \eqref{E:Gij2}, and \eqref{E:Gij4} allow one to construct a surjective group morphism 
\begin{align*}
\psi \colon G' &\twoheadrightarrow G'',\\
x^{(i)}_j &\mapsto g_{i,i+j}.
\end{align*} 
To show its injectivity, we will construct a set-theoretic map \[\pi \colon G \to G'\] as follows. Take an element $g \in G$ written using the generators $g_{i,j}$ and $h_i$. Move all the occurrences of $h_1^{\pm 1}$ to the left using the centrality of the $g_{i,j}$ and the twisted commutativity~\eqref{E:Gij5} of the $h_i$. Similarly, move all the occurrences of $h_2^{\pm 1}$ right after the $h_1^{\pm 1}$, and so on. Use the relations $h_i h_i^{-1}= h_i^{-1} h_i = 1$ to get a word of the form $h_1^{k_1}\ldots h_r^{k_r}g''$, where $k_1,\ldots,k_r \in \ZZ$, and $g''$ is a product of the $g_{i,j}{\pm 1}$. Next, in $g''$ replace each generator $g_{i,j}$ by $x^{(i)}_{j-i}$. Denote by $g'$ the word obtained. Considering it as an element of~$G'$, put $\pi(g)=g'$. This is well defined. Indeed, relations \eqref{E:Gij}, \eqref{E:Gij2}, \eqref{E:Gij4}, and $g_{i,j}^{\pm 1}g_{i,j}^{\mp 1}=1$ have counterparts in~$G'$; relation~\eqref{E:Gij3} does not change the result by construction; and neither do~\eqref{E:Gij5} and $h_i^{\pm 1}h_i^{\mp 1}=1$, as shows a computation similar to~\eqref{E:Compute}, combined with~\eqref{E:Gij}. Consider the restriction 
\[\varphi:=\pi|_{G''} \colon G'' \to G'.\]
It simply replaces each $g_{i,j}$ by $x^{(i)}_{j-i}$ in any representative of an element of~$G''$, and is thus the desired inverse of~$\psi$.

Finally, $G'$ is a finite abelian group, since so are the groups $G(M^{(i)})$.
\end{proof}

\section{Quandles with abelian structure group}\label{S:AbIFF}

Finally, we are ready to classify all finite quandles with abelian structure group.

\begin{defn}\label{D:M}
Let $(X,\op)$ be a finite abelian quandle with $r$ orbits. Its \emph{parameter matrix} $\MM(X,\op)$  is constructed from its parameters $M^{(i)}$ as follows. Its columns are indexed by couples $(i,j)$ with $1 \leq i < j \leq r$. Its rows are indexed by couples $(i,j)$ with $1 \leq i \leq r$, $1 \leq j < r$. All couples are ordered lexicographically here. The row $(i,j)$ corresponds to the $j$th row of $M^{(i)}$; for all $1 \le k \le \min \{j, r-i\}$, it contains $m^{(i)}_{j,k}$ in the column $(i,i+k)$, and for all $r-i < k \leq j$, it contains $-m^{(i)}_{j,k}$ in the column $(i+k-r,i)$. 
\end{defn}

For $r=2$ and the parameters $M^{(1)} = \begin{psmallmatrix} m^{(1)}_1 \end{psmallmatrix}$ and $M^{(2)} = \begin{psmallmatrix} m^{(2)}_1 \end{psmallmatrix}$, one gets 
\[\MM(X,\op)=\begin{psmallmatrix}
m^{(1)}_1 \\ -m^{(2)}_1
\end{psmallmatrix}.\]
For $r=3$, there are $3$ columns: $(1,2)$, $(1,3)$, $(2,3)$, and 
\[\MM(X,\op)=\begin{psmallmatrix}
m^{(1)}_1 & \cdot & \cdot\\
m^{(1)}_{2,1} & m^{(1)}_{2} & \cdot\\ 
\cdot & \cdot & m^{(2)}_1\\
-m^{(2)}_{2} & \cdot & m^{(2)}_{2,1}\\
\cdot & -m^{(3)}_1 & \cdot\\
\cdot & -m^{(3)}_{2,1} & -m^{(3)}_{2}\\
\end{psmallmatrix}\]
(the dots are zeroes).

\begin{thm}\label{T:AbIFF}
For a finite quandle $(X,\op)$, the following conditions are equivalent:
\begin{enumerate}
\item the structure group of $(X,\op)$ is abelian;
\item the quandle $(X,\op)$ is abelian, and its parameter group $G'(X,\op)$ is trivial;
\item the quandle $(X,\op)$ is abelian, and the maximal minors of its parameter matrix $\MM(X,\op)$ are globally coprime.
\end{enumerate}
\end{thm}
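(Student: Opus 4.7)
The plan is to establish the chain $(1) \Rightarrow [(X,\op) \text{ is abelian}]$ first, then the equivalence $(1) \Leftrightarrow (2)$ via Theorem~\ref{T:StrGrpOfAbQu}, and finally $(2) \Leftrightarrow (3)$ by computing $G'(X,\op)$ as a cokernel and invoking Smith normal form.

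For the first step, I would assume $G(X,\op)$ is abelian and exploit the defining relations $g_a g_b = g_b g_{a\op b}$: combined with $g_a g_b = g_b g_a$, these force $g_a = g_{a\op b}$ in $G(X,\op)$ for all $a,b \in X$. Pushing this equality through the natural right $G(X,\op)$-action on $X$ (where $g_b$ acts as the right translation $x \mapsto x \op b$, a well-defined action thanks to self-distributivity), I obtain $x \op a = x \op (a \op b)$ for all $x \in X$. This is the 2-reductivity condition, so Theorem~\ref{T:AbQuChar} identifies $(X,\op)$ as an abelian quandle. Given this, $(1) \Leftrightarrow (2)$ is immediate from Theorem~\ref{T:StrGrpOfAbQu}, which identifies $G'(X,\op)$ with the commutator subgroup of $G(X,\op)$: the latter is abelian if and only if its commutator subgroup vanishes.

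For $(2) \Leftrightarrow (3)$, I would compute $G'(X,\op)$ explicitly from~\eqref{E:G'}. The cross-orbit relations $x^{(i)}_{j-i} x^{(j)}_{i-j} = 1$ of Definition~\ref{D:G'} allow one to reduce from the $r(r-1)$ original generators $x^{(a)}_k$ to $\binom{r}{2}$ independent ones, which I denote $y_{(i,j)} := x^{(i)}_{j-i}$ for $1 \le i < j \le r$. Each $x^{(a)}_k$, $1 \le k \le r-1$, then reads as $y_{(a, a+k)}$ when $a+k \le r$ and as $y_{(a+k-r, a)}^{-1}$ when $a+k > r$. Substituting into the $r-1$ defining relations of each $G(M^{(i)})$ and collecting produces $r(r-1)$ $\ZZ$-linear relations on the $y_{(i,j)}$, whose coefficient vectors are exactly the rows of $\MM(X,\op)$; the sign convention in Definition~\ref{D:M} encodes the inversion above. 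Thus $G'(X,\op)$ is the cokernel of $\MM(X,\op)^{\mathrm{T}} \colon \ZZ^{r(r-1)} \to \ZZ^{\binom{r}{2}}$, which by Smith normal form is trivial if and only if the gcd of the $\binom{r}{2}\times\binom{r}{2}$ minors of $\MM(X,\op)$ equals~$1$, i.e.~condition~(3).

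The main obstacle I anticipate is the index bookkeeping in the last paragraph: one must ensure that each rewritten relation coming from the $G(M^{(i)})$'s lands on the correct row of $\MM(X,\op)$ with the correct sign, so that the cokernel and the abelian group presented by $\MM(X,\op)$ match on the nose. The remaining ingredients---the action argument in step one, the central-extension statement of Theorem~\ref{T:StrGrpOfAbQu}, and the classical Smith-normal-form criterion for triviality of a cokernel---are routine once the presentation has been correctly set up.
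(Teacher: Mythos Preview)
Your proposal is correct and follows essentially the same route as the paper. Two minor differences worth noting: for the implication $(1)\Rightarrow[\text{abelian}]$, the paper uses the $G$-action directly to derive the abelianity identity $(a\op b)\op c=(a\op c)\op b$, whereas you derive $2$-reductivity first and then invoke Theorem~\ref{T:AbQuChar}---both are fine. For $(1)\Leftrightarrow(2)$, your argument (commutator subgroup vanishes iff the group is abelian) is actually cleaner than the paper's, which appeals to the external result from~\cite{LV3} that an abelian structure group must be free abelian before concluding that its finite subgroup $G'$ is trivial.
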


For $r=2$, the coprimality condition from the theorem becomes $\gcd(m^{(1)}_1,m^{(2)}_1)=1$, and we recover the classification of finite quandles with structure group~$\ZZ^2$ from \cite{BaNa}. For $r=3$, we will simplify the condition from the theorem in Section~\ref{S:Z3IFF}.

\begin{proof}
Let us first show that a finite quandle $(X,\op)$ with abelian structure group $G:=G{(X,\op)}$ is abelian. Indeed, by the construction of the structure group, and due to quandle axioms (1) and (2), the assignment $a \cdot g_b:=a \op b$ extends to a right action of $G$ on~$X$. Since $G$ is abelian, we get
\[(a \op b) \op c = (a\cdot g_b) \cdot g_c = a\cdot (g_b g_c) = a\cdot (g_c g_b) = (a\cdot g_c) \cdot g_b = (a \op c) \op b.\]

Thus we only need to understand which finite abelian quandles have abelian structure group. By Theorem~\ref{T:StrGrpOfAbQu}, this happens if and only the parameter group $G':=G'(X,\op)$ is trivial. Indeed, if $G'$ is trivial, than $G \cong \ZZ^r$; and if $G$ is abelian, then by~\cite{LV3} it is free abelian, hence the only possibility for its finite subgroup~$G'$ is to be trivial. We thus proved (1) $\Longleftrightarrow$ (2).

Let us show the equivalence (2) $\Longleftrightarrow$ (3). Assume the quandle $(X,\op)$ abelian, with parameters $M^{(i)}$. The group~$G'$ admits as generators the elements $x^{(i)}_{j}$ for $i+j \leq r$, since for $i+j > r$ one has $x^{(i)}_{j}=(x^{(j+i-r)}_{r-j})^{-1}$. With these $n:=\frac{r(r-1)}{2}$ generators, $G'$ is isomorphic to the quotient of $\ZZ^n$ by the row space of the matrix $\MM:=\MM(X,\op)$. Indeed, the rows of~$\MM$ encode the defining relations of the components $G(M^{(i)})$ of~$G$, taking into account the identification $x^{(i)}_{j}=(x^{(j+i-r)}_{r-j})^{-1}$. By a classical argument, the triviality of~$G'$ is then equivalent to the maximal minors of $\MM$ being globally coprime. This can be seen as follows: given a finitely generated abelian group, both its isomorphism class and the greatest common divisor of the maximal minors of its presentation matrix as above are invariant under elementary row and column operations, and for a matrix in Smith normal form, the triviality of the group and the minors condition are both equivalent to the matrix being of maximal rank with all diagonal entries equal to $1$.
\end{proof}

One can ask whether there are many quandles satisfying the conditions from the theorem. The answer is yes, as is shown by the following example:

\begin{pro}
Let $(X,\op)$ be a finite abelian quandle with $r$ orbits, and assume that its parameters $m^{(i)}_{j,k}$ vanish whenever $k<j$. Then the following conditions are equivalent:
\begin{enumerate}[label=(\alph*)]
\item the structure group of $(X,\op)$ is $\ZZ^r$;
\item $\gcd(m^{(i)}_{j},m^{(j+i-r)}_{r-j})=1$ whenever $i+j>r$.
\end{enumerate}
\end{pro}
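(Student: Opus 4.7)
The plan is to combine Theorem~\ref{T:AbIFF} with an explicit computation of the parameter group $G'(X,\op)$ under the diagonality hypothesis: since a finite abelian quandle has abelian structure group if and only if $G'(X,\op)$ is trivial, it suffices to compute $G'(X,\op)$ from the numbers $m^{(i)}_k := m^{(i)}_{k,k}$.

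First I would use the assumption that each $M^{(i)}$ is diagonal to identify $G(M^{(i)})\cong\bigoplus_{k=1}^{r-1}\ZZ/m^{(i)}_k\ZZ$, so that $\prod_{i=1}^r G(M^{(i)})$ is generated (as an abelian group) by the symbols $x^{(i)}_k$, $1\le i\le r$, $1\le k\le r-1$, subject only to the single-generator torsion relations $(x^{(i)}_k)^{m^{(i)}_k}=1$. Throughout, orbit indices are taken modulo $r$. The further defining relations $x^{(i)}_{j-i}x^{(j)}_{i-j}=1$ of $G'(X,\op)$ each couple exactly two generators, one at level $k=j-i$ and one at level $r-k$.

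The key step is that these relations therefore respect the partition of the levels into unordered pairs $\{k,r-k\}$, $k\in\{1,\dots,r-1\}$, so that $G'(X,\op)$ splits as a direct sum indexed by these pairs. For a pair $\{k,r-k\}$ with $k\ne r-k$, the $r$ coupling relations $x^{(i)}_{k}\,x^{(i+k)}_{r-k}=1$ can be used to eliminate each $x^{(i)}_{r-k}$; the surviving generators $x^{(\ell)}_k$, $\ell=1,\dots,r$, are then each constrained by two independent torsion relations of orders $m^{(\ell)}_k$ and $m^{(\ell+k)}_{r-k}$, giving the corresponding summand
\[
\bigoplus_{\ell=1}^{r}\ZZ_{\gcd(m^{(\ell)}_k,\,m^{(\ell+k)}_{r-k})}.
\]
When $r$ is even and $k=r/2$, an analogous elimination yields $\bigoplus_{\ell=1}^{r/2}\ZZ_{\gcd(m^{(\ell)}_{r/2},\,m^{(\ell+r/2)}_{r/2})}$, with the same shape.

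Triviality of $G'(X,\op)$ is thus equivalent to all these gcds equaling $1$. Finally, I would unwind the reindexing: setting $(i,j)=(\ell+k,\,r-k)$ (reducing $\ell+k$ mod $r$ when needed) and exploiting the symmetry of $\gcd$, these conditions are in bijection with the conditions $\gcd(m^{(i)}_j,m^{(j+i-r)}_{r-j})=1$ for $i+j>r$ appearing in (b); both formulations contain exactly $r(r-1)/2$ distinct conditions. The main obstacle is the careful bookkeeping of this reindexing, together with a clean justification of the level-pair decomposition of $G'(X,\op)$ (which rests on there being no relation coupling two distinct level-pairs once abelianity is used); the remaining ingredients follow directly from the diagonality hypothesis and from earlier results in the paper.
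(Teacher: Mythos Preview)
Your proposal is correct and follows essentially the same approach as the paper: both reduce via Theorem~\ref{T:AbIFF} to the triviality of $G'(X,\op)$, then use the diagonality hypothesis and the coupling relations $x^{(i)}_{j-i}x^{(j)}_{i-j}=1$ to identify the generators in pairs and rewrite $G'$ as a direct product of cyclic groups of order $\gcd(m^{(i)}_{j},m^{(j+i-r)}_{r-j})$. The paper carries this out more tersely, simply noting that each $x^{(l)}_{k-l}$ and $x^{(k)}_{l-k}$ are mutual inverses and keeping the generators with $i+j>r$, whereas your level-pair decomposition is just a more explicitly organised version of the same elimination.
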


The quandles from the proposition have $r(r-1)$ non-zero parameters $m^{(i)}_{j}$, and condition (b) divides them into $\frac{r(r-1)}{2}$ coprime pairs. One thus obtains, for each~$r,$ an infinite family of quandles with structure group $\ZZ^r$.

\begin{proof}
One could compute the maximal minors from the point (3) of Theorem~\ref{T:AbIFF}. Instead, we choose here to check the triviality of the abelian group $G':=G'(X,\op)$, and use the equivalence (1) $\Longleftrightarrow$ (2) from the theorem. In our situation, $G'$ has the following presentation:
\[G' \,\cong\, \langle \, x^{(i)}_j \,|\, \left( x^{(i)}_j\right)^{m^{(i)}_{j}} =1,\, \, x^{(l)}_{k-l}x^{(k)}_{l-k} =1 \, \rangle,\]
where $1\leq i \leq r$, $1\leq j < r$, and $1\leq l < k \leq r$. The last condition means that the generators $x^{(l)}_{k-l}$ and $x^{(k)}_{l-k}$ are mutually inverse. The above presentation then rewrites as
\[G' \,\cong\, \langle \, x^{(i)}_j, \, i+j>r \,|\, \left( x^{(i)}_j\right)^{\gcd(m^{(i)}_{j},m^{(j+i-r)}_{r-j})} =1, \, i+j>r \, \rangle,\]
where $1\leq i \leq r$, $1\leq j < r$. But this is the direct product of the cyclic groups of orders $\gcd(m^{(i)}_{j},m^{(j+i-r)}_{r-j})$, where $1\leq i \leq r$, $1\leq j < r$, and $ i+j>r$.
\end{proof}

\section{Quandles with structure group $\ZZ^3$}\label{S:Z3IFF}

In the case $r=3$, instead of computing the ${{6}\choose{3}}=20$ maximal minors of the $6 \times 3$ parameter matrix, it is in fact sufficient to compute only $7$ simple greatest common divisors:

\begin{thm}\label{T:Z3IFF}
The structure group of a finite quandle is $\ZZ^3$ if and only if it is abelian with $3$ orbits, and its parameters satisfy the following conditions:
\begin{enumerate}
\item $\gcd(m^{(1)}_1,m^{(1)}_{2,1},m^{(2)}_{2})=\gcd(m^{(1)}_{2},m^{(3)}_1,m^{(3)}_{2,1})=\gcd(m^{(2)}_1,m^{(2)}_{2,1},m^{(3)}_{2})=1$;
\item $\gcd(m^{(1)}_1,m^{(1)}_{2,1}, m^{(2)}_1,m^{(3)}_{2})=\gcd(m^{(1)}_1,m^{(2)}_{2},m^{(3)}_1,m^{(3)}_{2,1})$ 

$=\gcd(m^{(1)}_{2},m^{(2)}_1,m^{(2)}_{2,1},m^{(3)}_1)=1$;
\item $\gcd(m^{(1)}_1,\, m^{(2)}_1,\, m^{(3)}_1,\, m^{(1)}_{2,1}m^{(2)}_{2,1}m^{(3)}_{2,1}-m^{(1)}_{2}m^{(2)}_{2}m^{(3)}_{2})=1$.
\end{enumerate}
\end{thm}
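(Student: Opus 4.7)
By Theorem~\ref{T:AbIFF}, the task reduces to showing that, for a finite abelian $3$-orbit quandle, the greatest common divisor $D$ of the $3 \times 3$ minors of $\MM(X,\op)$ equals $1$ if and only if the seven gcd conditions in items (1)--(3) all hold. Write $(a,b,c,d,e,f,g,h,j) := (m^{(1)}_1, m^{(1)}_{2,1}, m^{(1)}_2, m^{(2)}_1, m^{(2)}_2, m^{(2)}_{2,1}, m^{(3)}_1, m^{(3)}_{2,1}, m^{(3)}_2)$. A direct computation of the $\binom{6}{3} = 20$ maximal minors of $\MM(X,\op)$ shows that three vanish---namely those indexed by the row triples $\{1,2,5\}$, $\{1,3,4\}$, $\{3,5,6\}$, each of whose submatrices contains a zero column---and that the remaining seventeen are, up to sign,
\[
acd,\, acf,\, acj,\, adg,\, adh,\, afg,\, afh,\, agj,\, cde,\, bdg,\, bdh,\, bfg,\, bfh-cej,\, bgj,\, deg,\, deh,\, egj,
\]
of which only the minor $bfh-cej$, from the row triple $\{2,4,6\}$, is non-monomial.

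For the forward direction I check that each of the seven gcd conditions has the property that any prime $p$ dividing all its listed parameters divides every one of the seventeen minors; hence $D=1$ forces each of the seven gcds to equal $1$. The sixteen monomial minors are handled by direct inspection, as each contains one of the listed parameters as a factor. The minor $bfh-cej$ is dealt with case by case, using that each of the seven conditions includes at least one factor of $bfh$ \emph{and} one factor of $cej$: this is automatic for condition~(3), and a brief inspection for the other six (for instance, if $p \mid a,e,g,h$ then $p \mid h$ divides $bfh$ and $p \mid e$ divides $cej$).

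For the converse, assume the seven conditions hold and, for contradiction, that a prime $p$ divides all seventeen minors. I run a case analysis on the triple $(p\mid a,\;p\mid d,\;p\mid g)$ of divisibilities of the ``pure diagonal'' parameters $m^{(i)}_1$. When $p$ divides all three of $a,d,g$, the only remaining constraint is $p \mid bfh-cej$, which is precisely condition~(3). When $p \mid a$ and $p \mid d$ but $p \nmid g$, the minors $bfg,\,bgj,\,egj$ force $p \mid bf,\,bj,\,ej$, and a further split on $p \mid b$ yields a violation of one of $\gcd(a,b,e)$, $\gcd(a,b,d,j)$, $\gcd(d,f,j)$. When $p \mid a$ and $p \nmid d$, the minors $cde,\,deg,\,deh$ force $p \mid ce,\,eg,\,eh$, and splits on $p \mid c$ and then on $p \mid e$ or $p \mid b$ yield a violation of one of $\gcd(a,b,e)$, $\gcd(c,g,h)$, $\gcd(a,e,g,h)$. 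Finally, when $p \nmid a$, the minors $acd,\,acf,\,acj$ force $p \mid cd,\,cf,\,cj$; if $p \nmid c$ one immediately obtains $p \mid d,f,j$ violating $\gcd(d,f,j)=1$, while if $p \mid c$, further splits on $p \mid d$, then on $p \mid g$ (and if necessary on $p \mid f$) yield a violation of one of $\gcd(c,g,h)$, $\gcd(d,f,j)$, $\gcd(c,d,f,g)$. The main obstacle is simply keeping this case-tree tidy and complete; the underlying reason every branch closes is the coverage pattern exhibited in the forward direction---the seven conditions are precisely the minimal obstructions to coprimality of the seventeen minors.
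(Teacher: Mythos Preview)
Your argument is correct. After the common reduction via Theorem~\ref{T:AbIFF} and the computation of the seventeen non-vanishing $3\times 3$ minors (which matches the paper's list up to relabelling), the forward direction---each of the seven gcds divides~$D$---is exactly the paper's reasoning. The difference lies in the converse: the paper proceeds algebraically, using the three conditions in~(1) to collapse $D=\gcd(17\text{ monomials},\Delta)$ successively to $\gcd(bc,ac,ab,\ldots,\Delta)$, then to $\gcd(a,b,c,\Delta)$ via the conditions in~(2), at which point~(3) finishes; no prime and no case split ever appears. Your route instead fixes a hypothetical prime divisor of all minors and runs a case tree on which of the three ``diagonal'' parameters $m^{(1)}_1,m^{(2)}_1,m^{(3)}_1$ it divides, closing each branch against one of the seven conditions. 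The paper's method is shorter and more transparent about \emph{why} precisely these seven gcds suffice (each simplification step consumes one condition); your method is more mechanical and arguably easier to verify line by line, but gives less structural insight.
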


\begin{proof}
By Theorem~\ref{T:AbIFF}, we may assume our quandle abelian with $3$ orbits. For the sake of readability, let us rename the entries of its parameter matrix and permute its rows, to get the matrix
\begin{equation}\label{E:M}
M=\left(\begin{matrix}a&\cdot&\cdot\\
\cdot&b&\cdot\\
\cdot&\cdot&c\\
u&v&\cdot\\ 
w&\cdot&x\\ 
\cdot&y&z\end{matrix}\right).
\end{equation} 
The conditions from the theorem then become:
\begin{enumerate}
 \item $\gcd(a,u,w)=\gcd(b,v,y)=\gcd(c,x,z)=1$;
 \item $\gcd(a,b,w,y)=\gcd(a,c,u,z)=\gcd(b,c,v,x)=1$;
 \item $\gcd(a,b,c,\Delta)=1$,
\end{enumerate}
where $\Delta =uxy+vwz=-\det\left(\begin{matrix}u&v&\cdot\\w&\cdot&x\\\cdot &y&z\end{matrix}\right)$.
By Theorem~\ref{T:AbIFF}, we need to prove that these three conditions are equivalent to the coprimality of the maximal minors of~$M$, which here means
\begin{enumerate}[resume]
 \item $D:= \gcd (abc, abx, abz, avc, avx, avz, ayc, ayx, bcu, bcw, bux, buz, bwz, $
 
  \hspace*{7cm} $cvw, cuy, cwy, \Delta )=1.$
\end{enumerate}
All monomials in these minors contain one element from each column of~$M$, so (4) $\Longrightarrow$ (1). Similarly, all monomials contain, say, one element from the first column and one from the second, and never $u$ and $v$ simultaneously, so $\gcd(a,b,w,y)$ divides $D$. A similar argument for the remaining pairs of columns yields (4) $\Longrightarrow$ (2). Finally, all the minors except for $\Delta$ are divisible by $a$, $b$, or $c$, hence (4) $\Longrightarrow$ (3).

In the opposite direction, $\gcd(a,u,w)=1$ implies $\gcd(abc, bcu, bcw) = bc$. Similarly, $\gcd(b,v,y)=1$ implies $\gcd(abc, avc, ayc) = ac$, and $\gcd(c,x,z)=1$ implies $\gcd(abc, abx, abz) = ab$. Hence (1) allows one to simplify $D$ as
\[D=\gcd(bc, ac, ab, avx, avz, ayx, bux, buz, bwz, cvw, cuy, cwy, \Delta).\]
Also, $\gcd(c,x,z)=1$ implies $\gcd(avc, avx, avz) = av$. Analogous arguments lead to further simplifications:
\[D=\gcd(bc, ac, ab, av, ax, bu, bz, cw, cy, \Delta).\]
Now, $\gcd(a,b,w,y)=1$ yields $\gcd(ac,bc,wc,yc)=c$. Repeating the same argument for other conditions from~(2), one gets
\[D=\gcd(a,b,c, \Delta),\]
which is $1$ by (3).
\end{proof}

One could deduce conditions (1)-(3) above from the triviality of the parameter group $G'$ in a more conceptual way. Indeed, if $G'$ is trivial, it remains so when one forgets any two of its three generators---that is, when one removes any two of the three columns of the matrix~$M$ from~\eqref{E:M}. The maximal minors of the remaining $6 \times 1$ matrices yield conditions (1). Similarly, when one forgets, say, the third generator, one is left with the matrix
\[\left(\begin{matrix}a&\cdot\\
\cdot&b\\
\cdot&\cdot\\
u&v\\ 
w&\cdot\\ 
\cdot&y\end{matrix}\right).\]
Since the relations $x^m=x^n=1$ are equivalent to $x^{\gcd(m,n)}=1$, this matrix defines the same group as the matrix
\[\left(\begin{matrix}\gcd(a,w)&\cdot\\
\cdot&\gcd(b,y)\\
u&v\end{matrix}\right).\]
Applying to this matrix the proposition below, one gets conditions (2).

\begin{pro}
Define the abelian group $G$ as the quotient of $\ZZ^2$ by the row space of the matrix
\[M=\left(\begin{matrix}a&\cdot\\
\cdot&b\\
c&d\end{matrix}\right).\]
Then the following are equivalent:
\begin{enumerate}[label=(\alph*)]
 \item $G$ is trivial;
 \item $\gcd(ab,ad,bc)=1$;
 \item $\gcd(a,b)=\gcd(a,c)=\gcd(b,d)=1$.
\end{enumerate}
\end{pro}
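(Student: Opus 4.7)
The plan is to prove the equivalences via (a) $\Leftrightarrow$ (b) $\Leftrightarrow$ (c), using the maximal-minors criterion for the triviality of a finitely generated abelian group exactly as in the proof of Theorem~\ref{T:AbIFF}, and then a prime-by-prime case analysis for the purely number-theoretic equivalence.

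For (a) $\Leftrightarrow$ (b), I would simply invoke the standard Smith normal form argument already used above: the abelian group presented by the $3 \times 2$ integer matrix $M$ is trivial if and only if the greatest common divisor of its maximal minors is~$1$. The three maximal minors here are obtained by deleting one row at a time, and they equal $ab$, $ad$, and $-bc$ (up to sign), so triviality of $G$ is equivalent to $\gcd(ab, ad, bc) = 1$.

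For (b) $\Rightarrow$ (c), I would argue by contradiction: a prime dividing any of $\gcd(a,b)$, $\gcd(a,c)$, $\gcd(b,d)$ would divide all three of $ab, ad, bc$, contradicting~(b). For the converse (c) $\Rightarrow$ (b), I would take a prime $p$ dividing $\gcd(ab, ad, bc)$ and split according to whether $p \mid a$ or $p \mid b$ (forced by $p \mid ab$). If $p \mid a$, then $\gcd(a,b)=1$ forces $p \nmid b$, so $p \mid bc$ forces $p \mid c$, contradicting $\gcd(a,c)=1$; if $p \mid b$, then $\gcd(a,b)=1$ forces $p \nmid a$, so $p \mid ad$ forces $p \mid d$, contradicting $\gcd(b,d)=1$. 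Hence no such prime exists, establishing~(b).

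There is no real obstacle here: the hardest part is simply being careful in the prime-by-prime bookkeeping for (c) $\Rightarrow$ (b), which is routine. The Smith normal form step is used verbatim from the proof of Theorem~\ref{T:AbIFF}, so no new ingredient is required.
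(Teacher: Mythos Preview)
Your proof is correct and matches the paper's argument for (a)\,$\Leftrightarrow$\,(b) and for (b)\,$\Rightarrow$\,(c). The only difference lies in (c)\,$\Rightarrow$\,(b): you argue by contradiction via prime divisors, whereas the paper gives an explicit B\'ezout-type construction --- from $au+bv=1$ (using $\gcd(a,b)=1$), then writing $u=pb+qd$ and $v=ra+sc$ (using $\gcd(b,d)=\gcd(a,c)=1$), one obtains $(ab)(p+r)+(ad)q+(bc)s=1$ directly. Your prime-by-prime argument is slightly more elementary and avoids juggling coefficients; the paper's version is constructive and yields an explicit witness for the gcd being~$1$. Either is perfectly adequate here.
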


\begin{proof}
\ \begin{description}
 \item[$(a)\Leftrightarrow (b)$] This follows by computing the maximal minors of~$M$. (Cf. the argument at the end of the proof of Theorem~\ref{T:AbIFF}.)
 \item[$(b)\Rightarrow (c)$] This follows from the obvious inclusion of $\left< ab,ad,bc \right>$ in the three subgroups of $\ZZ$ that are $\left< a,b \right>$, $\left< a,c \right>$, $\left< b,d \right>$.
\item[$(c)\Rightarrow (b)$] Let $u,v\in\ZZ$ be such that $au+bv=1$. Let $p,q,r,s\in\ZZ$ be such that $pb+qd=u$ and $ra+sc=v$. Then \[(ab)(p+r)+(ad)q+(bc)s =1\] implies $(b)$ as desired. \qedhere
\end{description}
\end{proof}

Finally, assume $p:=\gcd(a,b,c,\Delta)$ greater than $1$. Our group $G'$ remains trivial when one requires the $p$th powers of its generators to vanish. This corresponds to considering the coefficients of the matrix~$M$ from~\eqref{E:M} modulo $p$. Since $p$ divides $a$, $b$, and $c$, the first three rows of the matrix obtained vanish. Since $p$ also divides $\Delta=-\det\left(\begin{matrix}u&v&\cdot\\w&\cdot&x\\\cdot &y&z\end{matrix}\right)$, all maximal minors of our matrix vanish. But for the group to be trivial, these maximal minors have to be coprime.

\section{Structure group vs homology: path maps}\label{S:Hom}

Before investigating the homology of abelian quandles, let us describe a relation between the structure group $G$ and the second homology group $H_2$ of any quandle (or even rack) $(X,\op)$. To do this, we will reverse the usual order in the definition of $H_2$ (restrict to the kernel $Z_2$ of $d_2$, then mod out the image $B_2$ of $d_3$): we will instead consider the quotient $Q_2:=C_2/B_2$ before restricting it to $H_2=Z_2/B_2$. 

We will use the classical rack homology decomposition. Let $O_i$ be the orbits of $(X,\op)$. By the formula~\eqref{E:d}, the differentials $d_k$ preserve the decomposition 
\begin{equation}\label{E:OrbitDecomposition}
C_k (X,\op) = \bigoplus_{i} \ZZ O_i \times X^{k-1}.
\end{equation}
For $L \in \{C,Z,B,Q,H\}$, denote by $L_{k;i}$ the part of $L_k$ corresponding to $\ZZ O_i \times X^{k-1}$.

Recall also  the classical (truncated) topological realisation $BX$ for $Q_2$ \cite{RackSpace}: it consists of $X$-labelled vertices, $X$-labelled directed edges $a \, \overset{b}{\to} \, a \op b$ (corresponding to the generator $(a,b)$ of $C_2$), and squares of the form 
\[\xymatrixcolsep{4pc}\xymatrixrowsep{1pc}\xymatrix{
a \op c \ar[r]^-{b \op c} & (a \op b) \op c \\
a \ar[r]^b \ar[u]^c         & a \op b \ar[u]^c }\]
The homology group $H_2$ of a quandle is the $1$st homology group $H_1(BX,\ZZ)$ of this space, since the boundary of the edge $a \, \overset{b}{\to} \, a \op b$ coincides with
\[d_2(a,b)= a \op b - a,\]
and the boundary of a square as above coincides with 
\[d_3(a,b,c) = (a \op b,c) - (a,c) - (a \op c, b \op c) + (a,b).\]
The orbit decomposition~\eqref{E:OrbitDecomposition} becomes the connected component decomposition for the CW space $BX$.

Now, fix an $a \in X$, and take a $g \in G$ written as a word $w$ in the generators $g_b$. Consider the path in $BX$ starting from the vertex $a$ and consisting of edges labelled by the letters from $w$; an edge points to the right or to the left depending on whether the corresponding generator or its inverse is in $w$. The labels of the remaining vertices are reconstructed from the edge labels in a unique way. The rightmost vertex label will be denoted by $a \cdot g$; it will be shown to be independent of the choice of the representative $w$ of $g$, and to yield the classical $G$-action on~$X$.

Here is an example with $w=g_b g_c^{-1} g_d$: 
\[\xymatrixcolsep{3pc}\xymatrix{
a \ar[r]^-b & a \op b & (a \op b) \wop c \ar[l]_-c\ar[r]^-d & ((a \op b) \wop c) \op d}.\]
As usual, $- \wop c$ is the inverse of the right translation $- \op c$. This path corresponds to (the class of) the element
\[(a,b)- ((a \op b) \wop c,c)+((a \op b) \wop c,d) \,\in\, Q_2,\]
and we have $a \cdot g_b g_c^{-1} g_d =((a \op b) \wop c) \op d$.

\begin{pro}\label{P:GvsH2}
Let $(X,\op)$ be a rack. Fix an $a \in X$ lying in the orbit $O_i$. The construction above defines a (set-theoretic) map
\[p_a \colon G(X,\op) \to Q_{2;i}(X,\op).\]
It restricts to a surjective group morphism
\[p'_a \colon G_a \twoheadrightarrow H_{2;i}(X,\op),\]
where $G_a$ is the stabiliser subgroup of~$a$ in $G(X,\op)$ for the action~$\cdot$ above.
\end{pro}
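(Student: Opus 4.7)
The plan is to verify well-definedness of $p_a$ on $G$, then promote the restriction to $G_a$ to a homomorphism onto $H_{2;i}$ via a concatenation argument and a topological surjectivity argument.

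\textbf{Well-definedness of $p_a$ and the action.} Read a word $w = g_{b_1}^{\varepsilon_1}\cdots g_{b_n}^{\varepsilon_n}$ as a concatenation of directed edges in $BX$ starting at $a$, producing a chain $p_a(w)\in C_{2;i}$ whose endpoint I call $a\cdot g$. I would check that both $p_a(w)$ modulo $B_{2;i}$ and $a\cdot g$ depend only on the class of $w$ in $G$, by considering two rewriting moves. (a) Inserting or deleting a cancelling pair $g_b g_b^{-1}$ or $g_b^{-1} g_b$ in the middle of $w$: the endpoint of the path is unchanged, so the suffix of $w$ contributes identically afterwards, while the two cancelling edges contribute opposite signed copies of the same generator of $C_2$, giving zero. (b) Replacing a substring $g_b g_c$ by $g_c g_{b\op c}$ read from some vertex $x$: by self-distributivity both versions end at $(x\op b)\op c = (x\op c)\op (b\op c)$, so the suffix again contributes identically; the two substrings themselves contribute $(x,b)+(x\op b,c)$ and $(x,c)+(x\op c,b\op c)$, whose difference is exactly $d_3(x,b,c)\in B_{2;i}$. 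Since these two moves generate all equalities in $G$, both $p_a \colon G \to Q_{2;i}$ and the rule $g\mapsto a\cdot g$ are well defined; the latter evidently agrees with the classical $G$-action $a\cdot g_b = a\op b$.

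\textbf{Restriction and homomorphism property.} When $g\in G_a$ the path is a loop at $a$, so $d_2(p_a(g)) = a - a = 0$ and $p_a(g)\in Z_{2;i}$; composing with the projection $Z_{2;i}\twoheadrightarrow H_{2;i}$ defines $p'_a$. A direct inspection of concatenation gives, for arbitrary $g,h\in G$, the identity $p_a(gh) = p_a(g) + p_{a\cdot g}(h)$ in $Q_{2;i}$; specialising to $g,h\in G_a$ yields $p_a(gh) = p_a(g)+p_a(h)$, so $p'_a$ is a group morphism into the abelian group $H_{2;i}$ (and factors through the abelianisation of $G_a$).

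\textbf{Surjectivity.} The path-component $BX_i$ of $BX$ containing $a$ coincides with $O_i$, as edges are labelled by $\op$-operations. By the Hurewicz theorem, $H_{2;i} = H_1(BX_i,\ZZ)$ is the abelianisation of $\pi_1(BX_i,a)$, and is generated by classes of based loops at $a$. Each such loop reads off as a word in the $g_b^{\pm 1}$, giving an element $g\in G$ with $a\cdot g = a$, i.e., $g\in G_a$; by construction $p'_a(g)$ is the homology class of the loop, so every generator of $H_{2;i}$ lies in the image.

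The main obstacle is the well-definedness step. It is tempting to verify the structure-group relation only at the single location where it is applied, but one must observe that self-distributivity synchronises the endpoint, so that the whole suffix of the word contributes an identical chain on both sides; only this ``suffix-invariance'' allows the local boundary $d_3(x,b,c)$ to bound the full difference. Once this is isolated, the remainder of the argument is essentially Hurewicz's theorem transcribed onto the 2-complex $BX_i$.
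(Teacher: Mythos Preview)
Your argument is correct and follows essentially the same route as the paper: you verify well-definedness by checking the two types of rewriting moves (trivial cancellation and the structure-group relator, the latter bounding a square $d_3(x,b,c)$), use the concatenation identity $p_a(gh)=p_a(g)+p_{a\cdot g}(h)$ to get the homomorphism on $G_a$, and argue surjectivity by representing $H_{2;i}=H_1(BX_i,\ZZ)$ with loops based at $a$. The only cosmetic difference is that the paper phrases the relator check as ``$g_bg_cg_{b\op c}^{-1}g_c^{-1}$ is sent to the boundary of a square'' and obtains based loops by conjugating an arbitrary loop by a path to $a$, whereas you invoke Hurewicz directly; these are the same argument.
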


The maps $p'_a$ will help us deduce things about the cohomology of abelian quandles from what we know about their structure groups. We hope that in other situations they might also transport insights about homology to structure groups.

\begin{proof}
We need to check that $p_a$ is compatible with the relations $g_b g_b^{-1} = g_b^{-1} g_b = 1$ and $g_b g_c g_{c \op b}^{-1} g_c^{-1} =1$ in $G:=G(X,\op)$. By construction, consecutive $g_b$ and $g_b^{-1}$ are sent to the same edge travelled in opposite directions, which can be omitted. The expression $g_b g_c g_{c \op b}^{-1} g_c^{-1}$ is sent to the boundary of a square, hence can be omitted in~$Q_2$ as well. Thus the map $p_a$ is well defined. In particular, the rightmost vertex label of $p_a(g)$, denoted by $a \cdot g$, is well defined, and yields a transitive right action of~$G$ on $O_i$. This action is determined by the property $a \cdot g_b = a \op b$ for all $a,b \in X$.

By construction, we have
\begin{align}
&p_a(gg') = p_a(g)p_{a \cdot g} (g'),\label{E:1cocycle}\\
&d_2(p_a(g)) = a\cdot g - a\notag
\end{align}
for all $g,g' \in G$. If $g$ fixes $a$, these become $p_a(gg')=p_a(g)p_{a} (g')$ and $d_2(p_a(g))=0$, so $p_a$ restricts to a group morphism $G_a \to H_{2;i}$. 

It remains to check that this restriction $p'_a$ is surjective. Elements of~$H_{2;i}$ are linear combinations of classes of loops in $BX$. If a loop representative starts at some $a' \in O_i$, we may conjugate it by a path connecting $a$ to $a'$, as $a$ and $a'$ lie in the same orbit~$O_i$. This does not change the homology class of the loop. Hence each loop is in the image of $p'_a$. 
\end{proof}

\begin{defn}\label{D:pa}
The maps $p_a$ above will be referred to as \emph{path maps}.
\end{defn}

By~\eqref{E:1cocycle}, path maps are group $1$-cocycles. 

\begin{rem}\label{R:IndepOfa}
For $a$ and $a'$ from the same orbit, the stabiliser subgroups $G_a$ and $G_{a'}$ are  related by a conjugation in~$G$, which intertwines the restricted path maps $p'_a$ and $p'_{a'}$.
\end{rem}

\section{Quandles with abelian structure group have torsion-free $H_2$}\label{S:HomAb}

For abelian quandles, path maps relate the torsion of~$H_2$, which is the interesting part for applications, to the parameter group $G'$, which we studied above:

\begin{thm}\label{T:HomAb}
Let $(X,\op)$ be a finite abelian quandle with $r$ orbits. Then 
\begin{equation}\label{E:HomAbQu}
H_2(X,\op) \cong \ZZ^{r^2} \bigoplus \oplus_{i=1}^r T_i,
\end{equation}
where the finite groups $T_i$ are all quotients of the parameter group $G'(X,\op)$.
\end{thm}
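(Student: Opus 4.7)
The plan is to use the path map from Proposition~\ref{P:GvsH2} orbit by orbit, combined with the structure-group description of Theorem~\ref{T:StrGrpOfAbQu} and the Etingof--Gra\~na rank formula $\operatorname{rk} H_2(X,\op) = r^2$ \cite{EtGr}. By the orbit decomposition~\eqref{E:OrbitDecomposition}, $H_2(X,\op) = \bigoplus_{i=1}^r H_{2;i}$, so it suffices to understand each $H_{2;i}$ separately. Fix a representative $a_i \in O_i$; Proposition~\ref{P:GvsH2} then yields a surjection $p'_{a_i}\colon G_{a_i} \twoheadrightarrow H_{2;i}$, which, since the target is abelian, factors through the abelianisation $G_{a_i}^{\mathrm{ab}}$.

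Next I would pin down $G_{a_i}$ using Theorem~\ref{T:StrGrpOfAbQu}. Since $G' = [G,G]$ is central in $G$ with $G/G' \cong \ZZ^r$, and since in an abelian quandle the action of $g_b$ on $X$ depends only on the orbit of $b$, the $G$-action on $X$ descends to an action of $\ZZ^r$. Identifying $O_i$ with $G(M^{(i)})$ via Theorem~\ref{T:AbQuChar}, this induced $\ZZ^r$-action factors through the surjection $\varphi_i\colon \ZZ^r \twoheadrightarrow G(M^{(i)})$, $e_j \mapsto x^{(i)}_{j-i}$, which acts on $G(M^{(i)})$ by right multiplication, hence freely and transitively. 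Consequently $G_{a_i}$ is exactly the preimage of $L_i := \ker \varphi_i$, fitting into a central extension
\[0 \to G' \to G_{a_i} \to L_i \to 0\]
with $L_i$ free abelian of rank $r$ (because $G(M^{(i)})$ is finite). The freeness of $L_i$ splits the induced abelianised sequence, giving
\[G_{a_i}^{\mathrm{ab}} \cong \ZZ^r \oplus \bigl(G'/[G_{a_i},G_{a_i}]\bigr),\]
a group of rank exactly $r$ whose torsion part is a quotient of $G'$.

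Composing with $p'_{a_i}$ produces a surjection $\ZZ^r \oplus (G'/[G_{a_i},G_{a_i}]) \twoheadrightarrow H_{2;i}$, so $\operatorname{rk} H_{2;i} \leq r$; summing over $i$ and comparing with $\operatorname{rk} H_2 = r^2$ forces equality for each $i$. The finite summand maps into $\operatorname{tors}(H_{2;i})$, and the composition $\ZZ^r \to H_{2;i} \twoheadrightarrow H_{2;i}/\operatorname{tors}(H_{2;i}) \cong \ZZ^r$ is a surjective endomorphism of $\ZZ^r$, hence an isomorphism. This splits $H_{2;i}$ as $\ZZ^r \oplus T_i$ with $T_i := \operatorname{tors}(H_{2;i})$ a quotient of $G'/[G_{a_i},G_{a_i}]$, and therefore of $G'$; reassembling the orbit pieces gives the desired decomposition~\eqref{E:HomAbQu}. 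The main obstacle I anticipate is step two: precisely identifying $G_{a_i}$ as the preimage of $L_i$, which requires the centrality of $G'$ together with careful bookkeeping of the action via the identities $g_{ax_{j-i}} = g_{i,j}g_a$ and $g_{i,j} \in G'$ from the proof of Theorem~\ref{T:StrGrpOfAbQu}; the rank-matching and splitting thereafter is essentially linear-algebraic housekeeping.
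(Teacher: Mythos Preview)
Your proof is correct and follows essentially the same route as the paper's: both use the orbit decomposition, the path-map surjection $p'_{a}\colon G_{a}\twoheadrightarrow H_{2;i}$, the inclusion $G'\le G_{a}$ coming from abelianity, the Etingof--Gra\~na rank, and a splitting against a free abelian quotient. The only tactical difference is that where the paper proves injectivity of the induced map $G_{a}/G'\to H_{2;i}/T_{i}$ by building an explicit retraction $\pi_{T}\colon H_{2;i}/T_{i}\to\ZZ^{r}$ and diagram-chasing, you instead identify $G_{a}/G'\cong\ZZ^{r}$ directly from the free $\ZZ^{r}$-action on $O_{i}$ and use that a surjection $\ZZ^{r}\to\ZZ^{r}$ is an isomorphism.
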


More precisely, $T_i$ is the image of~$G'$ (seen as the commutator subgroup of $G$) by the path map $p_a$ for any $a$ from the orbit~$O_i$.

\begin{defn}
The groups $T_i$ will be called the \emph{torsion groups} of $(X,\op)$.
\end{defn}

By Theorem~\ref{T:AbIFF}, the parameter group of a finite quandle with abelian structure group is trivial. Hence all its quotients $T_i$ are trivial as well, and we obtain

\begin{cor}\label{C:HomAbStrGrp}
Let $(X,\op)$ be a finite $r$-orbit quandle with abelian structure group. Then its 2nd homology group is torsion-free: $H_2(X,\op) \cong \ZZ^{r^2}$.
\end{cor}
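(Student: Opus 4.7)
The plan is to chain together the two main theorems already established in the excerpt, namely Theorem~\ref{T:AbIFF} and Theorem~\ref{T:HomAb}, so that the corollary is essentially a bookkeeping step rather than a fresh argument.

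First I would invoke Theorem~\ref{T:AbIFF}: the hypothesis that $G(X,\op)$ is abelian forces $(X,\op)$ itself to be abelian and, crucially, forces the parameter group $G'(X,\op)$ to be trivial. This is the implication (1) $\Longrightarrow$ (2) of that theorem, and it hands us exactly the input required by the homological classification.

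Next I would apply Theorem~\ref{T:HomAb}, which gives the decomposition
\[
H_2(X,\op) \cong \ZZ^{r^2} \oplus \bigoplus_{i=1}^r T_i,
\]
with each torsion group $T_i$ a quotient of $G'(X,\op)$. Since $G'(X,\op)$ has just been shown to be trivial, every $T_i$ is trivial, and the decomposition collapses to $H_2(X,\op) \cong \ZZ^{r^2}$.

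There is no genuine obstacle inside the corollary itself; all the substantive work has been absorbed into the two theorems being cited. The only thing worth making explicit in the write-up is that the rank statement $\rk H_2 = r^2$ is consistent with the classical result \cite{EtGr} that $\rk H_k(X,\op) = r^k$, so that the conclusion amounts to saying that the full $H_2$ attains this generic free-abelian rank with no additional torsion contribution---an observation that could be mentioned but does not require proof here.
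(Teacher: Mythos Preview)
Your proposal is correct and matches the paper's own argument essentially verbatim: the paper likewise deduces from Theorem~\ref{T:AbIFF} that $G'(X,\op)$ is trivial, and then observes that the torsion groups $T_i$ in the decomposition of Theorem~\ref{T:HomAb}, being quotients of $G'$, must all vanish. Your closing remark about the rank formula from~\cite{EtGr} is an optional aside---the rank $r^2$ is already baked into the statement of Theorem~\ref{T:HomAb}---but it does no harm.
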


The converse of this corollary is false: in Proposition~\ref{P:graphic} we will describe abelian quandles with non-abelian structure group and torsion-free~$H_2$.

\begin{proof}[Proof of Theorem \ref{T:HomAb}]
We will show the decomposition $H_{2;i} \cong \ZZ^{r} \oplus T_i$ for all $1 \leq i \leq r$, which implies~\eqref{E:HomAbQu}.

Let $O_1,\ldots,O_r$ be the orbits of $(X,\op)$. Fix an $a \in O_i$, and recall the restricted path map $p'_a \colon G_a \twoheadrightarrow H_{2;i}$. 

Since our quandle is abelian, commutators in~$G$ act trivially on any element of~$X$, so the commutator subgroup $G'$ is a subgroup of the stabiliser subgroup~$G_a$. The subgroup $G'$ is normal (even central) in~$G$, hence in~$G_a$. So, $p'_a$ induces a surjective group morphism $\overline{p}'_a \colon G_a/G' \twoheadrightarrow H_{2;i}/p'_a(G')$. The group $T_i:=p'_a(G')$  is an isomorphic image, hence a quotient, of $G'$. It is finite since $G'$ is so. By Remark~\ref{R:IndepOfa}, it is independent of the choice of the representative $a$ of the orbit~$O_i$.

Further, the inclusion $G_a \hookrightarrow G$ induces an inclusion $G_a/G' \hookrightarrow G/G'$. One can assemble everything in a commutative diagram, where all arrows but $p_a$ are group morphisms, and the three squares commute:
\[\xymatrixcolsep{3pc}\xymatrixrowsep{1pc}\xymatrix{
G \ar@{->>}[d] \ar[rr]^-{p_a} & & Q_{2;i} &\\
G/G'& G_a \ar@{_{(}->}[ul] \ar@{->>}[rr]^-{p'_a} \ar@{->>}[d] && H_{2;i} \ar@{->>}[d]  \ar@{_{(}->}[ul] \\
& G_a/G' \ar@{_{(}->}[ul] \ar@{->>}[rr]^-{\overline{p}'_a} && H_{2;i}/T_i
}\]
Here all the maps $\twoheadrightarrow$ and $\hookrightarrow$ are the obvious quotients and inclusions. In what follows they will all be abusively denoted by $\pi$ and $\iota$ respectively.

We will now prove that $\overline{p}'_a$ is injective, hence a group isomorphism. This will give the short exact sequence
\[0 \to T_i \to H_{2;i} \to G_a/G' \to 0.\]
The group $G_a/G'$ is a subgroup of $G/G' = G_{\mathrm{ab}} \cong \ZZ^r$ (cf. the proof of Theorem~\ref{T:StrGrpOfAbQu}). As a result, $G_a/G' \cong \ZZ^{r'}$ for some $r' \leq r$. Our short exact sequence becomes
\[0 \to T_i \to H_{2;i} \to \ZZ^{r'} \to 0.\]
Since $\ZZ^{r'}$ is free abelian, the sequence splits: $H_{2;i} \cong \ZZ^{r'} \oplus T_i$. The group $T_i$ being finite, we have $\rk(H_{2;i})=r'$. From \cite{EtGr} (or from a direct inspection of the orbit $O_i=G(M^{(i)})$), we get $\rk(H_{2;i})=r$, hence $r=r'$, and $H_{2;i} \cong \ZZ^{r} \oplus T_i$ as desired.

To prove the injectivity of~$\overline{p}'_a$, we need the group isomorphism $\overline{\pi}_G \colon G/G'= G_{\mathrm{ab}} \overset{\sim}{\to} \ZZ^r =\oplus_{j=1}^r \ZZ e_j$ induced by the map $\pi_G \colon G \twoheadrightarrow \ZZ^r$ sending $g_b$ to $e_j$ for all $b$ from the orbit $O_j$ (cf. the proof of Theorem~\ref{T:StrGrpOfAbQu}). Similarly, the assignment $(a',b) \mapsto e_j$ for $b \in O_j$ induces a map $\pi_Q \colon Q_{2;i} \twoheadrightarrow \ZZ^r$; indeed, $\pi_Q$ sends boundaries $d_3(a',b,c) = (a' \op b,c) - (a',c) - (a' \op c, b \op c) + (a',b)$ to $0$. Recalling the definition of the path map $p_a$, one sees that it intertwines $\pi_G$ and $\pi_Q$: $\pi_G = \pi_Q p_a$. Finally, the group $T_i=p'_a(G')$ is generated by (the classes of) the loops of the form
\[\xymatrixcolsep{4pc}\xymatrixrowsep{1pc}\xymatrix{
a \op c \ar[r]^-{b} & (a \op b) \op c \\
a \ar[r]^b \ar[u]^c         & a \op b \ar[u]^c }\]
The map $\pi_Q$ sends them to~$0$, and thus induces a map $\pi_T \colon H_{2;i}/T_i \to \ZZ^r$. One can now extend the above commutative diagram by two triangles and one square, all of which commute:
\[\xymatrixcolsep{2.5pc}\xymatrixrowsep{1.5pc}\xymatrix{
&& \ZZ^r &&&\\
&G \ar@{->>}[d] \ar[rr]^-{p_a} \ar@{->>}[ur]^-{\pi_G} & & Q_{2;i} \ar@{->>}[ul]_-{\pi_Q}  &&\\
&G/G' \ar `l[ul] `[uur]_-{\simeq}^-{\overline{\pi}_G} [uur] & G_a \ar@{_{(}->}[ul] \ar@{->>}[rr]^-{p'_a} \ar@{->>}[d] && H_{2;i} \ar@{->>}[d]  \ar@{_{(}->}[ul] &\\
&& G_a/G'\ar@{_{(}->}[ul] \ar@{->>}[rr]^-{\overline{p}'_a} && H_{2;i}/T_i \ar `r[ur] `[uuul]^-{\pi_T} [uuull] &
}\]
From this diagram, one reads
\[\pi_T \overline{p}'_a \pi = \pi_T \pi p'_a = \pi_Q \iota p'_a = \pi_Q p_a \iota = \pi_G \iota = \overline{\pi}_G \pi \iota = \overline{\pi}_G \iota \pi.\]
Since $\pi$ is surjective, this implies $\pi_T \overline{p}'_a = \overline{\pi}_G \iota$. Both $\overline{\pi}_G$ and $\iota$ being injective, so is $\overline{p}'_a$, as desired.
\end{proof}

\section{Structure group vs homology: examples}\label{S:HomAbEx}

Let us now see how the parameter group $G'$ and the torsion groups $T_i$ look like in particular cases.

We will start with the rank~$2$ case, i.e. with the quandles $U_{m,n}$. In Section~\ref{S:SGofAbQu} we determined  their parameter groups:
\[G'(U_{m,n}) \cong \ZZ_{\gcd(m,n)}.\]

\begin{pro}\label{P:Hom2orbits}
The 2nd homology group of a $2$-orbit abelian quandle $U_{m,n}$ is
\[H_2(U_{m,n}) \cong \ZZ^4 \oplus \ZZ^2_{\gcd(m,n)}.\]
In particular, its torsion groups both coincide with the whole parameter group:
\[T_1 \cong T_2 \cong G'.\]
\end{pro}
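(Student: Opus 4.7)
The plan is to apply Theorem~\ref{T:HomAb}, which already gives $H_2(U_{m,n})\cong \ZZ^{4}\oplus T_1\oplus T_2$ with each torsion group $T_i$ a quotient of $G'(U_{m,n})\cong\ZZ_d$, where $d=\gcd(m,n)$. It remains to show that both $T_i$ coincide with the whole parameter group, i.e., that the restricted path maps $p'_{x_0}|_{G'}$ and $p'_{y_0}|_{G'}$ are injective. Since $G'$ is cyclic, generated---via its identification with the commutator subgroup of $G$ from Theorem~\ref{T:StrGrpOfAbQu}---by $g_{1,2}=g_{x_0}g_{y_0}g_{x_0}^{-1}g_{y_0}^{-1}$, it suffices to exhibit a $\ZZ_d$-valued quandle $2$-cocycle that takes a generating value of $\ZZ_d$ on the image of $g_{1,2}$ under each of the two path maps.

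First I would compute these images by tracing the path of the commutator $g_{x_0}g_{y_0}g_{x_0}^{-1}g_{y_0}^{-1}$ starting from each orbit representative, using the rules $x_i\op x_j=x_i$, $x_i\op y_k=x_{i+1}$, $y_k\op x_i=y_{k+1}$, $y_k\op y_l=y_k$. A direct reading of the resulting zig-zag paths gives
\[p'_{x_0}(g_{1,2})=(x_0,x_0)-(x_1,x_0),\qquad p'_{y_0}(g_{1,2})=(y_1,y_0)-(y_0,y_0).\]

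Next, I would introduce the map $\phi\colon C_2(U_{m,n})\to\ZZ_d$ defined by
\[\phi(x_i,x_j)=j-i,\quad \phi(y_k,y_l)=l-k,\quad \phi(x_i,y_k)=-k,\quad \phi(y_k,x_i)=-i,\]
all taken modulo $d$. This is well-defined since $d$ divides both $m$ and~$n$. A systematic case analysis over the eight orbit-type combinations of $a,b,c\in U_{m,n}$ verifies the $2$-cocycle identity $\phi(a\op b,c)-\phi(a,c)-\phi(a\op c,b\op c)+\phi(a,b)=0$. Evaluating $\phi$ on the two cycles above then gives $\phi(p'_{x_0}(g_{1,2}))=1$ and $\phi(p'_{y_0}(g_{1,2}))=-1$ in $\ZZ_d$, both generators. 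Hence $p'_{x_0}|_{G'}$ and $p'_{y_0}|_{G'}$ are injective, so $T_1\cong T_2\cong G'\cong\ZZ_d$, and the full decomposition claimed for $H_2(U_{m,n})$ then follows from Theorem~\ref{T:HomAb}.

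The main obstacle is guessing the cocycle $\phi$: one has to combine the $m$-periodic data on the $x$-orbit with the $n$-periodic data on the $y$-orbit so that the result descends to $\ZZ_d$ while simultaneously absorbing the cross-orbit index shifts produced by the quandle action. Once the right formula is proposed, the verification is routine, and the built-in symmetry of $\phi$ between the two orbits immediately takes care of the second path map.
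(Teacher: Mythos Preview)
Your proof is correct and follows essentially the same strategy as the paper: apply Theorem~\ref{T:HomAb}, then detect that the path-map image of a generator of $G'$ has full order~$d$ by evaluating an explicit $\ZZ_d$-valued $2$-cocycle on it. The only cosmetic differences are that the paper uses the generator $g_{x_0}^{-1}g_{x_1}$ of~$G'$ (equivalent to your commutator via relation~\eqref{E:GroupOfFPQuandle3}) and a slightly different cocycle, $\psi(a,b)=\overline{b}-\overline{a}$ with $\overline{x_i}=i$, $\overline{y_k}=-k$; your single $\phi$ defined on all of $C_2$ handles both orbits at once, whereas the paper treats $T_1$ explicitly and invokes symmetry for $T_2$.
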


\begin{proof}
Put $d=\gcd(m,n)$, $O_1=\{x_0,x_1, \ldots, x_{m-1}\}$, $O_2=\{y_0,y_1,\ldots, y_{n-1}\}$. We will construct a map $\varphi \colon Q_{2;1} \to \ZZ_d$ sending (the class of) $(x_0,x_1)-(x_0,x_0)$ to $1$. Here we used the description~\eqref{E:FP2} of $U_{m,n}$. This shows that the order of $(x_0,x_1)-(x_0,x_0)$ is at least $d$. In the proof of Theorem~\ref{T:AbQuChar} we saw that $g_{x_0}^{-1}g_{x_0 \op y_0} = g_{x_0}^{-1}g_{x_1}$ generates the parameter group $G' \cong \ZZ_d$, therefore $\overline{p}'_{x_0}(g_{x_0}^{-1}g_{x_1})=-(x_0,x_0)+(x_0,x_1)$ generates~$T_1$. Hence $T_1 \cong \ZZ_d$. Similarly,  $T_2 \cong \ZZ_d$. Theorem~\ref{T:HomAb} allows us to conclude.
 
To describe the map $\varphi$, we need the map  
\begin{align*}
\overline{\bullet} \colon U_{m,n} &\to \ZZ_d,\\
x_i &\mapsto i \mod d,\\
y_k &\mapsto -k \mod d.
\end{align*}
We have
\[\overline{a \op b}=\begin{cases} \overline{a}+1 &\text{ if } a \in O_1, b \in O_2,\\
\overline{a}-1 &\text{ if } a \in O_2, b \in O_1,\\
\overline{a} &\text{ otherwise.}
\end{cases}\]
Further, extend the assignment
\begin{align*}
(a,b) &\mapsto \overline{b}-\overline{a}
\end{align*}
to a map $\psi \colon C_{2,1} \to \ZZ_d$ by linearisation. Let us check that it induces a map $\varphi \colon Q_{2;1} \to \ZZ_d$. We have
\begin{align*}
\psi(d_3(a,b,c)) &= \psi((a \op b,c) - (a,c) - (a \op c, b \op c) + (a,b))\\
&=\overline{c}-\overline{a \op b} -\overline{c}+\overline{a} - \overline{b \op c}+\overline{a \op c}+\overline{b}-\overline{a}\\
&=-\overline{a \op b} - \overline{b \op c}+\overline{a \op c}+\overline{b}.
\end{align*}
Testing all possibilities for the orbits of $a$, $b$ and $c$, one sees that $\psi(d_3(a,b,c))$ always vanishes, so $\psi$ indeed survives in the quotient $Q_{2;1}$. Further, as announced,
\[\psi((x_0,x_1)-(x_0,x_0))=(1-0)-(0-0)=1.\qedhere\] 
\end{proof}

To construct an example where not all the $T_i$ are the same, we need

\begin{pro}\label{P:HomSmallOrbits}
Given a finite abelian quandle, the order of any element in the torsion group $T_i$ divides the square of the size of the orbit~$O_i$.
\end{pro}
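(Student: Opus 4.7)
The plan is to exploit the central extension structure of the structure group from Theorem~\ref{T:StrGrpOfAbQu}: $G' = [G, G]$ is finite and central in $G$, and $G/G' \cong \ZZ^r$. Centrality lets us define a well-defined map $\omega \colon \ZZ^r \times \ZZ^r \to G'$, sending $(v_1, v_2)$ to the commutator $[\tilde{v}_1, \tilde{v}_2] \in G'$ of arbitrary lifts in $G$. The usual identity $[xy, z] = x[y, z]x^{-1}[x, z]$, combined with centrality of $G'$, reduces to $[xy, z] = [x, z][y, z]$, so $\omega$ is biadditive; in particular $\omega(n_1 v_1, n_2 v_2) = n_1 n_2\, \omega(v_1, v_2)$. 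Since the image of $\omega$ consists of all commutators of $G$, it generates $G' = [G, G]$.

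I will then bring in the stabiliser $G_a$ for $a \in O_i$. Because the structure-group action on $X$ factors through the abelianisation $G/G' = \ZZ^r$ for any abelian quandle, one has $G' \subseteq G_a$, so $L := G_a/G'$ is a sublattice of $\ZZ^r$. By orbit-stabiliser, $[\ZZ^r : L] = [G : G_a] = |O_i|$, hence $|O_i|\, \ZZ^r \subseteq L$. Consequently, for any $w_1, w_2 \in \ZZ^r$, the elements $|O_i|\,w_1$ and $|O_i|\,w_2$ lie in $L$, and their lifts belong to $G_a$. Biadditivity then gives
\[
|O_i|^2\, \omega(w_1, w_2) \,=\, \omega\bigl(|O_i|\,w_1,\, |O_i|\,w_2\bigr) \,\in\, [G_a, G_a].
\]
Varying $w_1, w_2$ over $\ZZ^r$ and using that the $\omega$-values generate $G'$, we conclude $|O_i|^2\, G' \subseteq [G_a, G_a]$.

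To finish, the path morphism $p'_a \colon G_a \twoheadrightarrow H_{2;i}$ from Proposition~\ref{P:GvsH2} has abelian target and so vanishes on $[G_a, G_a]$. Combined with the previous step, this yields $|O_i|^2\, G' \subseteq \ker(p'_a|_{G'})$, and hence $T_i = p'_a(G')$ is a quotient of $G'/|O_i|^2 G'$, an abelian group whose exponent divides $|O_i|^2$. The only delicate step is verifying the biadditivity of $\omega$ and that its image generates $G'$; both are routine consequences of $G'$ being central in $G$ and equal to $[G, G]$.
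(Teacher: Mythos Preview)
Your proof is correct and takes a genuinely different, more structural route than the paper's.

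The paper argues generator by generator: it takes the explicit generators $g_b^{-1}g_{b\op c}$ of $G'$, computes by hand that $(g_b^{-1}g_{b\op c})^{n^2}=g_b^{-n}g_c^{-n}g_b^n g_c^n$ (with $n=\#O_i$), and then invokes the quandle-specific fact that the right translation $-\op b$ splits $O_i$ into cycles of length dividing $n$, so that $g_b^n,g_c^n\in G_a$. The conclusion then follows since $p'_a$ kills $[G_a,G_a]$.

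You instead package the same phenomenon abstractly via the commutator pairing $\omega\colon \ZZ^r\times\ZZ^r\to G'$, whose biadditivity (from centrality of $G'$) gives $\omega(nw_1,nw_2)=n^2\omega(w_1,w_2)$ in one stroke, and you replace the cycle-length argument by orbit--stabiliser, which yields $n\ZZ^r\subseteq G_a/G'$ without touching the filtered-permutation description. The final step---$p'_a$ annihilates $[G_a,G_a]$---is the same in both proofs.

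What each buys: your argument is cleaner, uses only the central-extension structure $G'\hookrightarrow G\twoheadrightarrow\ZZ^r$ and the action on $O_i$, and would apply verbatim to any such setup. The paper's explicit computation, on the other hand, refines more easily: tracking the individual generators lets one sharpen the bound as in the Remark following the proposition (replacing $n^2$ by a product of orders of specific $x^{(i)}_j$ in $G(M^{(i)})$), which your global argument does not immediately give.
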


\begin{proof}
Fix an $a' \in O_i$, and put $n:=\# O_i$. As seen in the proof of Theorem~\ref{T:AbQuChar}, the elements $g_{b}^{-1}g_{b \op c}$ generate $G'$, therefore the elements $\overline{p}'_{a'}(g_{b}^{-1}g_{b \op c})=-(a,b)+(a, b \op c)$, where $a= a' \wop b$, generate $T_i$. Thus it suffices to show that $n^2(-(a,b)+(a,b \op c))=0$ in $Q_{2;i}$ for all $a \in O_i$, $b,c \in X$. Again by the proof of Theorem~\ref{T:AbQuChar}, the element $g_{b}^{-1}g_{b \op c}$ of~$G'$ depends only on the orbits of $b$ and $c$. This yields
\[(g_{b}^{-1}g_{b \op c})^n=(g_{b}^{-1}g_{b \op c})(g_{b \op c}^{-1}g_{(b \op c) \op c}) \cdots (g_{b\op c^{n-1}}^{-1}g_{b \op c^n})=g_{b}^{-1}g_{b \op c^n},\]
where $b \op c^k$ stands for $(\cdots((b \op c)\op c)\cdots)\op c$, with $k$ occurrences of~$c$. Since $g_{b}^{-1}g_{b \op c}$ is central in~$G$, so is $g_{b}^{-1}g_{b \op c^n}$, and we get
\[(g_{b}^{-1}g_{b \op c})^{n^2}=(g_{b}^{-1}g_{b \op c^n})^n=g_{b}^{-n}g_{b \op c^n}^n.\]
Further, the defining relations of the structure group yield
\[g_{b \op c^n} = g_c^{-1} g_{b \op c^{n-1}}g_c = \ldots = g_c^{-n} g_b g_c^n,\]
so $(g_{b}^{-1}g_{b \op c})^{n^2} = g_{b}^{-n} g_c^{-n} g_b^n g_c^n$. As shown in the proof of Theorem~\ref{T:AbQuChar}, the right translation $- \op b$ divides $O_i$ into cycles of equal length, which has to divide $n=\# O_i$. Hence $g_b^n$ stabilises $a \in O_i$: $g_b^n \in G_a$. The same holds for $g_c^n$. Then
\begin{align*}
n^2(-(a,b)&+(a,b \op c)) = \overline{p}'_{a'}((g_{b}^{-1}g_{b \op c})^{n^2}) = \overline{p}'_{a'}(g_{b}^{-n} g_c^{-n} g_b^n g_c^n) \\
&= -\overline{p}'_{a'}(g_{b}^{n})- \overline{p}'_{a'}(g_c^{n})+ \overline{p}'_{a'}(g_b^n)+ \overline{p}'_{a'}(g_c^n) = 0,
\end{align*}
as desired.
\end{proof}

\begin{rem}
Along the same lines, one shows that $a \op b^n = a \op c^m = a$ for some (equivalently, any) $a \in O_i$, $b \in O_j$, $c \in O_k$ implies that the order of the generator $\overline{p}'_{a}(x^{(j)}_{k-j})$ in $T_i$ divides $mn$. The optimal choice for $m$ and $n$ is the order of $x_{j-i}^{(i)}$ and $x_{k-i}^{(i)}$ in $G(M^{(i)})$ respectively.
\end{rem}

Proposition~\ref{P:HomSmallOrbits} directly implies
\begin{cor}\label{C:Hom1EltOrbits}
Given a finite abelian quandle with a $1$-element orbit $O_i$, its torsion group $T_i$ is trivial.
\end{cor}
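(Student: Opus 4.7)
The plan is to derive this as an immediate consequence of Proposition~\ref{P:HomSmallOrbits}. That proposition asserts that every element of the torsion group $T_i$ has order dividing $(\#O_i)^2$. Setting $\#O_i = 1$ gives that every element of $T_i$ has order dividing $1$, so $T_i$ is trivial.

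There is essentially no obstacle: the work has already been done in Proposition~\ref{P:HomSmallOrbits}, and the corollary is just the extreme case. The only thing to note is that $T_i$ being a finite abelian group (as a quotient of the finite abelian group $G'(X,\op)$ by Theorem~\ref{T:HomAb}) in which every element has order $1$ means $T_i = 0$. One could alternatively inspect the proof of Proposition~\ref{P:HomSmallOrbits} directly in this case: the generators of $T_i$ have the form $\overline{p}'_{a'}(g_b^{-1}g_{b \op c}) = -(a,b) + (a, b\op c)$ with $a \in O_i$, and since $\#O_i = 1$ forces $a \op c = a$ for all $c \in X$, we get $b \op c$ in the same orbit as $b$, but more importantly $g_b^{-1}g_{b\op c}$ acts trivially on $a$ and the path $p_a(g_b^{-1}g_{b \op c})$ becomes a loop whose triviality in $H_{2;i}$ follows from the $n=1$ case of the argument. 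But the shortest route is simply to invoke Proposition~\ref{P:HomSmallOrbits}.
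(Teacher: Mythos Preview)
Your proposal is correct and matches the paper's approach exactly: the paper simply states that Proposition~\ref{P:HomSmallOrbits} directly implies the corollary, which is precisely your argument that $\#O_i=1$ forces every element of $T_i$ to have order dividing $1$. The additional direct inspection you sketch is unnecessary but harmless.
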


To get a concrete example, let us extend the quandle $U_{m,n}$ by adding an element~$z$, and putting
\begin{equation}\label{E:QuExt}
a \op b = a \qquad \text{ whenever } a=z \text{ or } b=z.
\end{equation}
One gets a $3$-orbit abelian quandle, denoted by $U^*_{m,n}$. Its parameter matrix is
\[\begin{psmallmatrix}
m & \cdot & \cdot\\
\cdot & 1 & \cdot\\ 
\cdot & \cdot & 1\\
-n & \cdot & \cdot\\
\cdot & -1 & \cdot\\
\cdot & \cdot & -1\\
\end{psmallmatrix}.\]

\begin{pro}
The parameter group of the quandle $U^*_{m,n}$ is 
\[G'(U^*_{m,n}) \cong \ZZ_{\gcd(m,n)}.\] 
Its 2nd homology group is
\[H_2(U^*_{m,n}) \cong \ZZ^9 \oplus \ZZ^2_{\gcd(m,n)}.\]
In particular, its torsion groups are
\[T_1 \cong T_2 \cong G', \qquad T_3 \cong \{0\}.\]
\end{pro}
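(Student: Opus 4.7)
The plan is to compute $G'$ directly from its presentation, then deduce the homology statement by plugging into Theorem~\ref{T:HomAb}, reducing everything to the identification of the torsion groups $T_i$: these are handled by Corollary~\ref{C:Hom1EltOrbits} for $i=3$ and by an explicit linear-map construction analogous to Proposition~\ref{P:Hom2orbits} for $i = 1, 2$.

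Reading off the parameter matrix, one finds $G(M^{(1)}) \cong \ZZ_m$ generated by $x^{(1)}_1$, $G(M^{(2)}) \cong \ZZ_n$ generated by $x^{(2)}_2$, and $G(M^{(3)}) = 1$. Of the three cross-orbit relations in Definition~\ref{D:G'}, only $x^{(1)}_1 x^{(2)}_2 = 1$ is nontrivial, so $G'$ is cyclic, generated by $x^{(1)}_1$ subject to $(x^{(1)}_1)^m = (x^{(1)}_1)^{-n} = 1$, giving $G'(U^*_{m,n}) \cong \ZZ_{d}$ where $d := \gcd(m,n)$. Theorem~\ref{T:HomAb} then yields $H_2(U^*_{m,n}) \cong \ZZ^9 \oplus T_1 \oplus T_2 \oplus T_3$ with each $T_i$ a quotient of $\ZZ_d$, and Corollary~\ref{C:Hom1EltOrbits} gives $T_3 = 0$ since $O_3 = \{z\}$ is a one-element orbit.

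To establish $T_1 \cong T_2 \cong \ZZ_d$, it suffices to produce, for each $i \in \{1,2\}$, a homomorphism $H_{2;i}(U^*_{m,n}) \to \ZZ_d$ sending the generator of $T_i$ to $1$. By the proof of Theorem~\ref{T:HomAb} these generators are the images under $p'_{a_i}$ of the generator $g_{1,2} = g_{x_0}^{-1}g_{x_1}$ of $G'$, namely $(x_0, x_1) - (x_0, x_0)$ in $H_{2;1}$ and $(y_{-1}, x_1) - (y_{-1}, x_0)$ in $H_{2;2}$. Following the template of Proposition~\ref{P:Hom2orbits}, I will define
\[\psi_1(x_i, x_j) := j - i, \quad \psi_1(x_i, y_k) := -k, \quad \psi_1(x_i, z) := 0,\]
\[\psi_2(y_k, y_l) := k - l, \quad \psi_2(y_k, x_j) := j, \quad \psi_2(y_k, z) := 0,\]
all modulo $d$, extended by linearity to $C_{2;i}$. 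Evaluating on the above generators gives $1 \in \ZZ_d$ in each case, so $T_i$ surjects onto $\ZZ_d$; being a quotient of the cyclic group $\ZZ_d$, it must then coincide with it.

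The one non-routine point will be the verification that $\psi_1$ and $\psi_2$ vanish on every boundary $d_3(a, b, c)$ with $a$ in the $i$th orbit; this requires nine case-by-case checks per map, three more than in Proposition~\ref{P:Hom2orbits}. Note that the naive extension of the map from that proposition by setting $\overline{z} := 0$ fails, as exemplified by the boundary $d_3(x_i, y_k, z) = (x_{i+1}, z) - (x_i, z)$, which forces $\psi_1(x_i, z)$ to be independent of $i$. This is why the formulas above decouple pairs whose second coordinate is $z$ from the ``linear'' pattern $\overline{b} - \overline{a}$; with that adjustment, all nine cases reduce to straightforward arithmetic identities in $\ZZ_d$.
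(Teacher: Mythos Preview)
Your proposal is correct and follows essentially the same strategy as the paper: compute $G'$ from the presentation, invoke Theorem~\ref{T:HomAb} and Corollary~\ref{C:Hom1EltOrbits}, and then exhibit explicit maps $C_{2;i} \to \ZZ_d$ detecting the generator of $T_i$ for $i=1,2$.

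One small remark: the ``naive extension'' you warn against (extending $\overline{\,\cdot\,}$ by $\overline{z}=0$ and keeping the formula $\overline{b}-\overline{a}$) is indeed broken, but the paper sidesteps this more simply than you do. Rather than modifying the formula on the $(x_i,y_k)$ pairs, it keeps the original $\psi$ from Proposition~\ref{P:Hom2orbits} on $U_{m,n}\times U_{m,n}$ and declares $\psi(a,b)=0$ whenever $a=z$ or $b=z$. This works because in every boundary $d_3(a,b,c)$ with exactly one of $b,c$ equal to $z$, the two surviving $z$-free terms cancel. Your variant (with $\psi_1(x_i,y_k)=-k$ instead of $-k-i$) also works, differing from the paper's map by a term that vanishes on boundaries, so there is no error---just a slightly more complicated choice than necessary.
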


Observe that $U_{m,n}$ and $U^*_{m,n}$ have the same torsion in~$H_2$.

\begin{proof}
The parameter group is easily computed from the parameter matrix (recall that the columns represent generators, and the rows relations).

Since the orbit $\O_3$ of $U^*_{m,n}$ is one-element, by Corollary~\ref{C:Hom1EltOrbits} the torsion group~$T_3$ is trivial. To get $T_1\cong T_2 \cong \ZZ_{\gcd(m,n)}$, one can extend the map~$\psi$ from the proof of Proposition~\ref{P:Hom2orbits} from $U_{m,n}$ to $U^*_{m,n}$ by putting $\psi(a,b)=0$ whenever $a=z$ or $b=z$. 
\end{proof}

We continue with computations for one more family of abelian quandles. In particular we obtain two $3$-orbit quandles ($U^*_{2,2}$ and $U^{\star}_{2,2}$) having the same parameter group~$G'$ but different homology groups~$H_2$.

Extend the quandle $U_{m,n}$ by two elements $z_0$ and $z_1$, with
\begin{align*}
a \op z_s &= a \qquad \text{ for all } a,\\
z_s \op x_i &= z_s \op y_k = z_{s+1}.
\end{align*}
Here and below $s \in \{0,1\}$, and the index $s+1$ is taken modulo $2$. One gets a $3$-orbit abelian quandle, denoted by $U^{\star}_{m,n}$. Its parameter matrix is
\[\begin{psmallmatrix}
m & \cdot & \cdot\\
\cdot & 1 & \cdot\\ 
\cdot & \cdot & 1\\
-n & \cdot & \cdot\\
\cdot & -2 & \cdot\\
\cdot & -1 & -1\\
\end{psmallmatrix}.\]

\begin{pro}\label{P:star}
The parameter group of the quandle $U^{\star}_{m,n}$ is 
\[G'(U^{\star}_{m,n}) \cong \ZZ_{\gcd(m,n)}.\] 
Its 2nd homology group is
\[H_2(U^{\star}_{m,n}) \cong \ZZ^9 \oplus \ZZ^2_{\gcd(m,n)} \oplus \ZZ_{\gcd(m,n,2)}.\]
In particular, its torsion groups are
\[T_1 \cong T_2 \cong G', \qquad T_3 \cong \ZZ_{\gcd(m,n,2)},\]
so that $T_3 \cong G'$ if and only if $\gcd(m,n) \in \{1,2\}$.
\end{pro}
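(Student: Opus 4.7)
The plan is to combine Theorem~\ref{T:HomAb} with explicit constructions of maps out of $C_{2;i}$, mirroring the strategy of Proposition~\ref{P:Hom2orbits}.

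First, I would read $G'(U^{\star}_{m,n}) \cong \ZZ_{\gcd(m,n)}$ directly off the parameter matrix via Definition~\ref{D:G'}: the unit entries kill the generators $x^{(2)}_1$, $x^{(3)}_1$, $x^{(1)}_2$, and the gluing relation $x^{(1)}_1 = (x^{(2)}_2)^{-1}$ leaves a single generator subject to $(x^{(1)}_1)^m = (x^{(1)}_1)^n = 1$. Theorem~\ref{T:HomAb} then reduces the homology computation to identifying the three cyclic quotients $T_1, T_2, T_3$ of $\ZZ_{\gcd(m,n)}$.

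For $T_1$ (and symmetrically $T_2$), I would extend the map $\psi \colon C_{2;1}(U_{m,n}) \to \ZZ_{\gcd(m,n)}$ of Proposition~\ref{P:Hom2orbits} to $C_{2;1}(U^{\star}_{m,n})$ by declaring $\psi((x_i, z_s)) := 0$. A short case check verifies that $\psi \circ d_3 = 0$ still holds whenever $b$ or $c$ lies in $O_3$: using $x_i \op z_s = x_i$, every such boundary either collapses outright or pairs up equal $\psi$-values. Since $\psi$ sends the generator $(x_0, x_1) - (x_0, x_0) = p'_{x_0}(g_{x_0}^{-1} g_{x_1})$ of $T_1$ to $1$, we conclude $T_1 \cong G'$.

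The heart of the argument is $T_3 \cong \ZZ_{\gcd(m,n,2)}$. The generic bound $|T_3| \mid |O_3|^2 = 4$ from Proposition~\ref{P:HomSmallOrbits} is a factor of two too weak, so the main obstacle is sharpening it. For the upper bound I would exhibit the explicit $3$-chain identity
\[d_3\bigl((z_0, x_0, y_0) + (z_1, x_0, y_0) - (z_0, x_0, x_1)\bigr) = -2\bigl((z_1, x_1) - (z_1, x_0)\bigr),\]
which shows that twice the generator $(z_1, x_1) - (z_1, x_0) = p'_{z_0}(g_{x_0}^{-1} g_{x_1})$ of $T_3$ vanishes in $H_{2;3}$; combined with $T_3$ being a quotient of $G' \cong \ZZ_{\gcd(m,n)}$, this gives $|T_3| \mid \gcd(m,n,2)$. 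For the matching lower bound I would define
\[\varphi_3 \colon C_{2;3} \to \ZZ_{\gcd(m,n,2)}, \qquad (z_s, x_i) \mapsto i, \quad (z_s, y_k) \mapsto k + s, \quad (z_s, z_t) \mapsto 0,\]
and check $\varphi_3 \circ d_3 = 0$ by running through the orbits of the last two arguments of $d_3((z_s, b, c))$; every case reduces to $0$ except the mixed ones (such as $b \in O_2$, $c \in O_1$), which produce $\pm 2 \pmod{\gcd(m,n,2)}$ and hence also vanish. Since $\varphi_3$ sends the generator of $T_3$ to $1$, the identification $T_3 \cong \ZZ_{\gcd(m,n,2)}$ follows, and the final equivalence ``$T_3 \cong G'$ iff $\gcd(m,n) \in \{1,2\}$'' is immediate.
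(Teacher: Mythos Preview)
Your argument is correct and follows the same overall architecture as the paper (compute $G'$ from the parameter matrix, invoke Theorem~\ref{T:HomAb}, handle $T_1,T_2$ by extending the cocycle of Proposition~\ref{P:Hom2orbits}, and treat $T_3$ by a matched upper and lower bound). The details for $T_3$ differ in an interesting way. For the upper bound the paper works inside the structure group: from $g_yg_xg_yg_y=g_yg_yg_yg_{(x\op y)\op y}$ it deduces $2\,p'_{z_0}(g_{x_0}^{-1}g_{x_1})=0$ via the group morphism $p'_{z_0}$. Your explicit $3$-chain $(z_0,x_0,y_0)+(z_1,x_0,y_0)-(z_0,x_0,x_1)$ gives the same relation directly at the chain level; this is a bit more hands-on but avoids the detour through $G$. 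For the lower bound both proofs build a cocycle $Q_{2;3}\to\ZZ_2$ by case analysis, but with different formulas: the paper uses $(a,b)\mapsto\varepsilon(a,b)+\overline{b}$ with a characteristic-function correction $\varepsilon$, whereas your $(z_s,y_k)\mapsto k+s$ absorbs the correction into a shift on the $y$-part. Your formula is arguably the cleaner of the two; note however that only the case $b\in O_2,\,c\in O_1$ actually produces $-2$, while $b\in O_1,\,c\in O_2$ already gives $0$, so your parenthetical ``such as'' is exactly the right hedge.
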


\begin{proof}
As usual, the parameter group can be computed from the parameter matrix.

For $T_1$ and $T_2$, the proof we gave for $U^*_{m,n}$  repeats verbatim. For $T_3$, we need to compute the order $t$ of $\overline{p}'_{z_0}(g_{x_0}^{-1}g_{x_0 \op y_0})$ in $H_2$. We know that $t$ divides the order of $g_{x_0}^{-1}g_{x_0 \op y_0} = g_{x_0}^{-1}g_{x_1}$ in~$G'$, which is $\gcd(m,n)$. On the other hand, in $G$ we have
\[g_yg_xg_yg_y = g_yg_y g_{(y \op y) \op y} g_{(x \op y) \op y} =  g_yg_y g_yg_{(x \op y) \op y},\]
where $x:=x_0$ and $y:=y_0$. Since $g_yg_x$, $g_yg_y$ and $g_y g_{(x \op y) \op y}$ stabilise $z_0$, this yields
\[\overline{p}'_{z_0}(g_yg_x) + \overline{p}'_{z_0}(g_yg_y) = \overline{p}'_{z_0}(g_yg_y) + \overline{p}'_{z_0}(g_y g_{(x \op y) \op y}),\]
hence $\overline{p}'_{z_0}(g_yg_x) = \overline{p}'_{z_0}(g_y g_{(x \op y) \op y})$, and
\begin{align*}
0&=\overline{p}'_{z_0}((g_yg_x)^{-1}(g_yg_{(x \op y) \op y}))=
\overline{p}'_{z_0}(g_x^{-1}g_{(x \op y) \op y})\\
&=\overline{p}'_{z_0}((g_x^{-1}g_{x \op y})(g_{x \op y}^{-1}g_{(x \op y) \op y}))=\overline{p}'_{z_0}(g_x^{-1}g_{x \op y})+\overline{p}'_{z_0}(g_{x \op y}^{-1}g_{(x \op y) \op y})\\
&=2\overline{p}'_{z_0}(g_x^{-1}g_{x \op y}).
\end{align*} 
So, $t$ divides $2$, and hence $\gcd(m,n,2)$. It remains to show that, if $m$ and $n$ are both even, then $t=2$. For this we will construct a map $\theta \colon Q_{2;3} \to \ZZ_2$ not vanishing on (the class of) $\overline{p}'_{z_0}(g_x^{-1}g_{x \op y}) = (z_1,x_1)-(z_1,x_0)$. Put
\begin{align*}
\overline{\bullet} \colon U^{\star}_{m,n} &\to \ZZ_2,\\
x_i &\mapsto i \mod 2,\\
y_k &\mapsto k \mod 2,\\
z_s &\mapsto 0.
\end{align*}
For $a \in O_i$ and  $b \in O_j$, we have
\[\overline{a \op b}=\begin{cases} \overline{a}+1 &\text{ if } \{i,j\}=\{1,2\},\\
\overline{a} &\text{ otherwise.}
\end{cases}\]
Further, put 
\begin{align*}
\varepsilon(a,b) &= \begin{cases} 1 &\text{ if } a=z_0, b \in O_1,\\
0 &\text{ otherwise},
\end{cases}
\end{align*}
and extend the assignment $(a,b) \mapsto \varepsilon(a,b)+\overline{b}$ to a map $\psi \colon C_{2,3} \to \ZZ_2$ by linearisation. Let us check that it induces a map $\theta \colon Q_{2;3} \to \ZZ_2$. We have
\begin{align*}
\psi(d_3(a,b,c)) &= \psi((a \op b,c) - (a,c) - (a \op c, b \op c) + (a,b))\\
&=\overline{b \op c}+\overline{b}+\varepsilon(a \op b,c) +\varepsilon(a,c) +\varepsilon(a \op c, b) + \varepsilon(a,b).
\end{align*}
The part $\overline{b \op c}+\overline{b}$ vanishes unless $\{i,j\}=\{1,2\}$, where $b \in O_i$, $c \in O_j$; the part $\varepsilon(a \op b,c) +\varepsilon(a,c)$ vanishes unless $c \in O_1$ and $b \in O_1 \cup O_2$; similarly, $\varepsilon(a \op c, b) + \varepsilon(a,b)$ vanishes unless $b \in O_1$ and $c \in O_1 \cup O_2$. The sum of these three parts is zero in any case. Further, as announced,
\[\theta((z_1,x_1)-(z_1,x_0))=1-0=1.\qedhere\] 
\end{proof}

We finish with another generalisation of the family $U_{m,n}$, borrowed from \cite{HomGraphic}. Given positive integers $n_1,\ldots,n_r$, put $O_i:=\ZZ_{n_i}$, $U_{n_1,\ldots, n_r}:=\sqcup_{i=1}^r O_i$, and 
\[a \op b =\begin{cases} a & \text{ if $a$ and $b$ are from the same } O_i,\\
a+1  & \text{ if $a$ and $b$ are from different } O_i. \end{cases}\]
In terms of the permutations $f_{i,j}$ from \eqref{E:fij}, for each $i$ we impose all the $f_{i,j}$ to be the same cycle $a \mapsto a+1$ on $O_i$. This is an $r$-orbit quandle, with orbits $O_i$. The case $r=2$ covers the family $U_{m,n}$. In the case $r=3$, the parameter matrix is
\[\begin{psmallmatrix}
n_1 & \cdot & \cdot\\
-1 & 1 & \cdot\\ 
\cdot & \cdot & n_2\\
-1 & \cdot & -1\\
\cdot & -n_3 & \cdot\\
\cdot & 1 & -1\\
\end{psmallmatrix}.\]

\begin{pro}\label{P:graphic}
For $r \geq 3$, the parameter group of the quandle $U_{n_1,\ldots, n_r}$ is 
\[G'(U_{n_1,\ldots, n_r}) \cong \ZZ_{\gcd(n_1,\ldots,n_r,2)}.\] 
For $r=3$, its 2nd homology group is
\[H_2(U_{n_1,n_2, n_3}) \cong \ZZ^{9} \oplus \ZZ^r_{\gcd(n_1,n_2,n_3,2)}.\]
In particular, all torsion groups coincide with the whole parameter group:
\[T_1 \cong T_2 \cong T_3 \cong G'.\]
For $r \geq 4$, there is no torsion:
\[H_2(U_{n_1,\ldots, n_r}) \cong \ZZ^{r^2}.\]
\end{pro}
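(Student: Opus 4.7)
The plan is to imitate Propositions~\ref{P:Hom2orbits},~\ref{P:star}: first compute~$G'$ from the parameter matrix, then use Theorem~\ref{T:HomAb} to reduce the $H_2$-computation to the torsion groups~$T_i$. In $U_{n_1,\ldots,n_r}$ every $f_{i,j}$ with $j \neq i$ coincides with the cycle $a \mapsto a+1$ on $O_i$, so in $G(M^{(i)}) \cong \ZZ_{n_i}$ one has $x^{(i)}_1 = x^{(i)}_2 = \cdots = x^{(i)}_{r-1}$. Thus $G'$ is the abelian group generated by one element $x^{(i)}$ per orbit subject to $(x^{(i)})^{n_i} = 1$ and $x^{(i)} x^{(j)} = 1$ for $i < j$; when $r \geq 3$, combining three such pairings forces $(x^{(1)})^2 = 1$, yielding $G' \cong \ZZ_{\gcd(n_1,\ldots,n_r,2)}$.

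Fix $a \in O_i$. By the proof of Proposition~\ref{P:HomSmallOrbits}, $T_i$ is generated modulo $B_{2,i}$ by the classes of the elements $X_{a',b} := -(a', b) + (a', b+1)$, where $b \in X$ and $a'$ is the appropriate shift of~$a$. An elementary $d_3$-manipulation (comparing $d_3(a, b, c)$ and $d_3(a, b+1, c)$) shows that modulo $B_{2,i}$ the class depends only on the orbit containing~$b$; write $[X_j]$ for this common value. The two key relations in $H_{2,i}$ are the \emph{pairing relation} $[X_j] + [X_k] \equiv 0$ (extracted from $d_3(a, b, c) + d_3(a, c, b) \equiv 0$ for $b \in O_j, c \in O_k$ with $i, j, k$ pairwise distinct), and, for $r \geq 4$, the \emph{3-cycle relation} $[X_{j_1}] + [X_{j_2}] + [X_{j_3}] \equiv 0$, extracted from
\[
d_3(a, b_1, b_2) + d_3(a, b_2, b_3) + d_3(a, b_3, b_1) \equiv 0 \pmod{B_{2,i}}
\]
for three distinct outside orbits $O_{j_1}, O_{j_2}, O_{j_3}$ with $b_s \in O_{j_s}$ (every $(a, \bullet)$-term cancels in the sum). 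For $r \geq 4$, subtracting the pairing relation from the 3-cycle relation forces $[X_{j_3}] \equiv 0$; by symmetry every $[X_j]$ vanishes, so $T_i = 0$ and $H_2 \cong \ZZ^{r^2}$.

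For $r = 3$ the case $d := \gcd(n_1, n_2, n_3, 2) = 1$ is trivial. When $d = 2$ (all $n_i$ even), each $T_i$ is a quotient of $\ZZ_2$ and it remains to verify $[X_j] \not\equiv 0$ in $H_{2,i}$ for some $j \neq i$. By the cyclic symmetry of $U_{n_1, n_2, n_3}$ in its orbits, it suffices to produce one linear map $\psi_1 \colon C_{2,1} \to \ZZ_2$ with $\psi_1 \circ d_3 = 0$ and $\psi_1([X_2]) = 1$. Using the identification $O_j \cong \ZZ_{n_j}$ and writing $\overline{x}$ for the reduction modulo~$2$, a natural candidate (following Propositions~\ref{P:Hom2orbits},~\ref{P:star}) is
\[
\psi_1(a, b) = \begin{cases} \overline{a} - \overline{b}, & b \in O_1, \\ \overline{b}, & b \in O_2, \\ \overline{a} + \overline{b}, & b \in O_3, \end{cases} \qquad a \in O_1,
\]
whose vanishing on $d_3$ is verified by a routine case analysis on the orbits of $(b, c)$. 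This yields $T_i \cong G' \cong \ZZ_2$ and $H_2(U_{n_1, n_2, n_3}) \cong \ZZ^9 \oplus \ZZ_2^3$.

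The main obstacle will be the $r = 3$, $d = 2$ construction: the orbit-symmetric guess $\psi(a, b) = \overline{b}$ leaves a nonzero residue on $d_3(a, b, c)$ precisely in the ``all three orbits distinct'' configuration, so a valid $\psi_1$ must deliberately break the symmetry between the outside orbits $O_2$ and $O_3$ (via the $\overline{a} + \overline{b}$ term above). Once this asymmetry is introduced, the remaining verifications are entirely mechanical.
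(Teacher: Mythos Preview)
Your proposal is correct and follows essentially the same strategy as the paper: the $G'$ computation is identical, and for $r=3$ your cocycle $\psi_1$ coincides (over $\ZZ_2$, where $\overline{a}-\overline{b}=\overline{a}+\overline{b}$) with the paper's $\varepsilon(a,b)+\overline{b}$. For $r\geq 4$ both arguments kill the generators $X_{a',b}=(a',b+1)-(a',b)$ via $d_3$-relations, but the organisation differs: the paper uses the single relations $d_3(a,a',b)$, $d_3(a,b,a')$, $d_3(a,b,c)$ to define auxiliary quantities $\varphi(a),\psi(a)$ and deduce $\varphi(a)=0$, whereas you use the \emph{sums} $d_3(a,b,c)+d_3(a,c,b)$ and $d_3(a,b_1,b_2)+d_3(a,b_2,b_3)+d_3(a,b_3,b_1)$; your packaging is a bit cleaner since the $(a,\bullet)$-terms cancel in the sums and no intermediate independence claims are needed (indeed your appeal to an independence statement for $[X_j]$ is unnecessary: the pairing and 3-cycle relations already involve the same $a+1$ and arbitrary $b_s$, so they subtract directly). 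One small point you leave implicit, as does the paper, is that showing $X_{a',b}=0$ for $b\notin O_i$ suffices: this follows because $G'$ is cyclic and generated by any $g_{j,k}$ with $j\neq i$, whose image under $p'_a$ is such an $X_{a',b}$.
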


Homology computations for these quandles were done in~\cite{HomGraphic}. Here we correct their result for the $r \geq 4$ case.

\begin{proof}
Recall the presentation~\eqref{E:G'} for the parameter group $G'$. In our case the relations coming from a component $G(M^{(i)})$ can be interpreted as follows:
\begin{enumerate}
\item the generators $x^{(i)}_{j}$ depend on~$i$ only, and can thus be denoted by $g_i$;
\item $g_i^{n_i}=1$ for all $i$.
\end{enumerate}
The inter-component relations, $x^{(i)}_{j-i}x^{(j)}_{i-j}=1$ for $i < j$, become $g_ig_j=1$. Since $r \geq 3$, for any $i,j$ there is a $k \notin \{i,j\}$, and $g_ig_k=g_jg_k=1$ yields $g_i=g_j$. So, one has only one generator $g:=g_i$ (for any $i$). In terms of this generator, the relations become $g^{n_i}=1$ for all $i$, and $g^2=1$. Summarising, one gets a cyclic group of order $\gcd(n_1,\ldots,n_r,2)$.

Now, from Theorem~\ref{T:HomAb} we know that each $T_i$ is a quotient of $G'\cong \ZZ_{\gcd(n_1,\ldots,n_r,2)}$. In the case $r=3$, to see that, say,  $T_1$ is the whole $\ZZ_2$ when all the $n_i$ are even, one can repeat the argument from the proof of Proposition~\ref{P:star}, putting 
\[\overline{a}=a \mod 2 \qquad \text{ for all } a,\]
\begin{align*}
\varepsilon(a,b) &= \begin{cases} 1 &\text{ if } \overline{a} = 1, b \in O_1 \sqcup O_3,\\
0 &\text{ otherwise}.
\end{cases}
\end{align*}

In the case  $r \geq 4$, it remains to show the triviality of, say, $T_1$. Computations below will be done in~$Q_2$, and will be valid for all $a,a' \in O_1$, $b,b'\in O_i$, $c \in O_j$, with $1 \neq i \neq j \neq 1$. Relation $d_3(a,a',b)=0$ in $Q_2$ yields
\[(a+1,a'+1)=(a,a').\]
 Relation $d_3(a,b,a')=0$ yields
\[(a,b+1)-(a,b)=(a+1,a')-(a,a').\]
This expression is independent of~$b$ nor~$a'$; let us denote it by $\varphi(a)$. Finally, relation $d_3(a,b,c)=0$ yields
\begin{equation}\label{E:dabc}
(a+1,b+1)-(a,b)=(a+1,c)-(a,c).
\end{equation}
Given a $d \in O_k$, $k \neq 1$, one can always find $j \notin \{1,i,k\}$ (recall that we have $r \geq 4$ orbits), so  
\[(a+1,b+1)-(a,b)=(a+1,c)-(a,c)=(a+1,d+1)-(a,d).\]
Thus the LHS of~\eqref{E:dabc} is independent of~$b$ (as long as it does not lie in~$O_1$). Let us denote it by $\psi(a)$. Looking at the RHS of~\eqref{E:dabc}, one gets
\[\psi(a)=(a+1,b+1)-(a,b)=(a+1,b+1)-(a,b+1)+(a,b+1)-(a,b)=\psi(a)+\varphi(a),\]
hence $\varphi(a)=0$ for all $a$. But this means that the generator $(a,b \op c)-(a,b)=(a,b+1)-(a,b)$ of~$H_2$ is trivial.
\end{proof}


\bibliographystyle{alpha}
\bibliography{refs}
\end{document}